\documentclass[12pt,a4paper]{amsart} 

\usepackage{latexsym}
\usepackage{geometry}
\usepackage{pinlabel}         
\geometry{letterpaper}             
\usepackage{graphicx}
\usepackage{amssymb, amsthm}
\usepackage{epstopdf}
\usepackage{romannum}
\usepackage{tikz}
\usepackage{subfig}
\usepackage[arrow, matrix, curve]{xy} 
\usepackage{hyperref}
\usetikzlibrary{arrows}

\theoremstyle{plain}

\newtheorem*{propositionA}{Proposition A}
\newtheorem*{corollaryB}{Corollary B}
\newtheorem*{propositionC}{Proposition C}
\newtheorem*{corollaryD}{Corollary D}
\newtheorem*{maintheorem}{Main Theorem}

\newtheorem*{corollaryE}{Corollary E}

\newtheorem{theorem}{Theorem}[section]
\newtheorem{lemma}[theorem]{Lemma}
\newtheorem{Itheorem}[theorem]{Iwasawa's Structure Theorem}
\newtheorem{vtheorem}[theorem]{Van Dantzig's Theorem}

\newtheorem{corollary}[theorem]{Corollary}
\newtheorem{example}[theorem]{Example}
\newtheorem{examples}[theorem]{Examples}
\newtheorem{proposition}[theorem]{Proposition}
\theoremstyle{definition}

\newtheorem{remark}[theorem]{Remark}
\newtheorem{definition}[theorem]{Definition}
\newtheorem*{convention}{Convention}

\def\CAT{{\rm{CAT}}$(0)$}

\usepackage{graphicx}
\newcommand{\Z}{\ensuremath{\mathbb{Z}}}

\title[]{Abstract group actions of locally compact groups on CAT(0) spaces}

\author{Philip M\"oller and Olga Varghese}
\date{\today}

\address{Philip M\"oller\\
Department of Mathematics\\
University of M\"unster\\ 
Einsteinstra\ss e 62\\
48149 M\"unster (Germany)}
\email{philip.moeller@uni-muenster.de}

\address{Olga Varghese\\
Department of Mathematics\\
University of M\"unster\\ 
Einsteinstra\ss e 62\\
48149 M\"unster (Germany)}
\email{olga.varghese@uni-muenster.de}

\keywords{Automatic continuity, locally compact groups, \CAT\ spaces}
\subjclass[2010]{Primary: 20F65, secondary: 22D05}
\thanks{The work was funded by the Deutsche Forschungsgemeinschaft (DFG, German Research Foundation) under Germany's Excellence Strategy EXC 2044--390685587, Mathematics M\"unster: Dynamics-Geometry-Structure. The authors were also partially supported by (Polish) Narodowe Centrum
	Nauki, UMO-2018/31/G/ST1/02681}

\begin{document}

\pagenumbering{arabic}
\begin{abstract}
We study abstract group actions of locally compact Hausdorff groups on \CAT\ spaces. Under mild assumptions on the action we show that it is continuous or has a global fixed point. This mirrors results by Dudley and Morris-Nickolas for actions on trees. As a consequence we obtain a geometric proof for the fact that any abstract group homomorphism from a locally compact Hausdorff group into a torsion free \CAT\ group is continuous.
\end{abstract}
\maketitle

\section{Introduction}
This article is located in the area of geometric group theory, a field at the intersection of algebra, geometry and topology. The basic principle of geometric group theory is to investigate algebraic properties of infinite groups using geometric and topological methods. In this project we want to understand the ways in which topological groups can act on spaces of non-positive curvature with the focus on automatic continuity. The main idea of automatic continuity is to establish conditions on topological groups $G$ and $H$ under which an abstract group homomorphism $\varphi\colon G\to H$ is necessarily continuous. {\it We will always assume that topological groups have the Hausdorff property.} There are several results in this direction in the literature, see \cite{CorsonBogopolski}, \cite{ConnerCorson}, \cite{CorsonVarghese}, \cite{Dudley}, \cite{KramerVarghese}, \cite{MorrisNickolas}. Here, the group $G$ will be a locally compact group while $H$ will be the isometry group of a \CAT\ space equipped with the discrete topology. 

One of the powerful theorems in this direction is due to Dudley \cite{Dudley}. He proved that any abstract group homomorphism from a locally compact group into a free group is continuous. By the Nielsen-Schreier-Serre Theorem,  a group is free if and only if it acts freely on a tree \cite[I \S 3.3 Theorem 4]{Serre}. 
For example, the free group $F(\left\{x,y\right\})$ acts freely via left multiplication on the following tree. 

\begin{figure}[h]
\begin{tikzpicture}[scale=0.8, transform shape]
\draw (0,-3)--(0,3);
\draw (-3,0)--(3,0);
\draw (-2,-1)--(-2,1);
\draw (-1,-2)--(1,-2);
\draw (2,-1)--(2,1);
\draw (-1,2)--(1,2);
\draw[fill=black]  (0,0) circle (1pt);
\node at (0.2,0.2) {\tiny{$e$}};
\draw[fill=black]  (2,0) circle (1pt);
\node at (1.8,0.2) {\tiny{$x$}};
\draw[fill=black]  (0,2) circle (1pt);
\node at (0.2,2.2) {\tiny{$y$}};
\draw[fill=black]  (-2,0) circle (1pt);
\node at (-1.7,0.2) {\tiny{$x^{-1}$}};
\draw[fill=black]  (0,-2) circle (1pt);
\node at (0.3,-1.8) {\tiny{$y^{-1}$}};
\draw[fill=black]  (2,1) circle (1pt);
\node at (2.3,1.2) {\tiny{$xy$}};
\draw[fill=black]  (2,-1) circle (1pt);
\node at (2.4,-0.8) {\tiny{$xy^{-1}$}};
\draw[fill=black]  (3,0) circle (1pt);
\node at (3.2,0.2) {\tiny{$xx$}};
\draw[fill=black]  (0,-3) circle (1pt);
\node at (0.55,-2.8) {\tiny{$y^{-1}y^{-1}$}};
\draw[fill=black]  (-1,-2) circle (1pt);
\node at (-1.5,-1.8) {\tiny{$y^{-1}x^{-1}$}};
\draw[fill=black]  (1,-2) circle (1pt);
\node at (1.4,-1.8) {\tiny{$y^{-1}x$}};
\draw[fill=black]  (-3,0) circle (1pt);
\node at (-3.4,0.2) {\tiny{$x^{-1}x$}};
\draw[fill=black]  (-2,1) circle (1pt);
\node at (-1.6,1.2) {\tiny{$x^{-1}y$}};
\draw[fill=black]  (-2,-1) circle (1pt);
\node at (-1.5,-0.8) {\tiny{$x^{-1}y^{-1}$}};
\draw[fill=black]  (0,3) circle (1pt);
\node at (0.2,3.2) {\tiny{$yy$}};
\draw[fill=black]  (-1,2) circle (1pt);
\node at (-1.3,2.2) {\tiny{$yx^{-1}$}};
\draw[fill=black]  (1,2) circle (1pt);
\node at (1.3,2.2) {\tiny{$yx$}};
\draw[dashed] (3,0)--(3.5,0);
\draw[dashed] (3,0.5)--(3,-0.5);
\draw[dashed] (-3,0)--(-3.5,0);
\draw[dashed] (-3,0.5)--(-3,-0.5);
\draw[dashed] (2,1)--(2,1.5);
\draw[dashed] (1.5,1)--(2.5,1);
\draw[dashed] (-2,1)--(-2,1.5);
\draw[dashed] (-1.5,1)--(-2.5,1);
\draw[dashed] (2,-1)--(2,-1.5);
\draw[dashed] (1.5,-1)--(2.5,-1);
\draw[dashed] (-2,-1)--(-2,-1.5);
\draw[dashed] (-1.5,-1)--(-2.5,-1);
\draw[dashed] (0,3)--(0,3.5);
\draw[dashed] (-0.5,3)--(0.5,3);
\draw[dashed] (0,-3)--(0,-3.5);
\draw[dashed] (-0.5,-3)--(0.5,-3);
\draw[dashed] (-1,2)--(-1.5,2);
\draw[dashed] (-1,1.5)--(-1,2.5);
\draw[dashed] (-1,-2)--(-1.5,-2);
\draw[dashed] (-1,-1.5)--(-1,-2.5);
\draw[dashed] (1,2)--(1.5,2);
\draw[dashed] (1,1.5)--(1,2.5);
\draw[dashed] (1,-2)--(1.5,-2);
\draw[dashed] (1,-1.5)--(1,-2.5);
\node at (-7,0) {$F(\left\{x,y\right\})$};
\node at (-5,0) {\Large{$\curvearrowright$}};
\end{tikzpicture}
\end{figure}

Hence, the automatic continuity result by Dudley translates into geometric group theory as follows:  Any abstract action of a locally compact group on a simplicial tree that is via hyperbolic isometries is continuous. 

This result was generalized by Morris and Nickolas in \cite{MorrisNickolas}. They studied abstract group homomorphisms from locally compact groups into free products of arbitrary groups, i.e. replacing each factor $\mathbb{Z}$ of a free group by an arbitrary group. Morris and Nickolas proved that every abstract group homomorphism from a locally compact group into a free product of groups is either continuous, or the image of the homomorphism is conjugate to a subgroup of one of the factors. Let us discuss this result in the special case where the target group is a free product of two groups $A$ and $B$. By the Bass-Serre Theory \cite[I \S 4.1 Theorem 7]{Serre}, there is a simplicial tree $T_{A*B}$ on which $A*B$ acts simplicially without a global fixed point. The Bass-Serre tree $T_{A*B}$ is constructed as follows: the vertices of $T_{A*B}$ are cosets of $A$ and $B$ and two vertices $gA$ and $hB$, $g,h\in A*B$ are connected by an edge iff $gA\cap hB\neq\emptyset$.

\begin{figure}[h]
\begin{center}
\begin{tikzpicture}
\draw[fill=black]  (0,0) circle (1pt);
\node at (0.1,0.2) {\tiny{$\langle a\rangle$}};
\draw[fill=black]  (1,0) circle (1pt);
\node at (0.9,0.2) {\tiny{$\langle b\rangle$}};
\draw (0,0)--(1,0);
\draw[fill=black]  (-0.7,0.7) circle (1pt);
\node at (-0.3,0.7) {\tiny{$a\langle b\rangle$}};
\draw (0,0)--(-0.7,0.7);
\draw[fill=black]  (-0.7,-0.7) circle (1pt);
\node at (-0.3,-0.7) {\tiny{$a^2\langle b\rangle$}};
\draw (0,0)--(-0.7,-0.7);
\draw[fill=black]  (1.7,0.7) circle (1pt);
\node at (1.3,0.7) {\tiny{$b\langle a\rangle$}};
\draw (1,0)--(1.7,0.7);
\draw[fill=black]  (1.7,-0.7) circle (1pt);
\node at (1.3,-0.7) {\tiny{$b^2\langle a\rangle$}};
\draw (1,0)--(1.7,-0.7);
\draw[dashed] (1.7,0.7)--(1.7,1.6);
\draw[dashed] (1.7,0.7)--(2.6,0.7);
\draw[dashed] (1.7,-0.7)--(1.7,-1.6);
\draw[dashed] (1.7,-0.7)--(2.6,-0.7);
\draw[dashed] (-0.7,0.7)--(-0.7,1.6);
\draw[dashed] (-0.7,0.7)--(-1.6,0.7);
\draw[dashed] (-0.7,-0.7)--(-0.7,-1.6);
\draw[dashed] (-0.7,-0.7)--(-1.6,-0.7);
\node at (-5,0) {$\langle a\rangle * \langle b\rangle$};
\node at (-3,0) {\Large{$\curvearrowright$}};
\end{tikzpicture}
\end{center}
\end{figure}

The picture above shows a small part of the tree $T_{\langle a\rangle *\langle b\rangle}$ in the case where the orders of $a$ and $b$ are equal to $3$. The action of $\langle a\rangle * \langle b\rangle$ on $T_{\langle a\rangle * \langle b\rangle}$ is given by left multiplication. In particular, the vertices have conjugates of $A$ or $B$ as their respective stabilizers. Thus, the fixed point set of every non-trivial elliptic isometry is equal to a single vertex. Taking the point of view of geometric group theory, any abstract group action $L\rightarrow A*B \hookrightarrow{\rm Isom}(T_{A*B})$ is continuous or has a global fixed point.   

Inspired by these results the starting point for our investigation is the study of abstract group actions of locally compact groups on simplicial trees. A natural question is the following: 
\begin{quote}
{\em Under which conditions is an abstract group action of a locally compact group on a simplicial tree continuous or has a global fixed point?}
\end{quote}

We obtain the following result which immediately follows from our Main Theorem.
\begin{propositionA}
Let $\Phi\colon L \to{\rm Isom}(T)$ be an abstract group action of a locally compact group on a simplicial tree $T$ by simplicial isometries without inversion. 

If the fixed point set ${\rm Fix}(\Phi(l))$ of every elliptic isometry $\Phi(l)$ in $\Phi(L)-\left\{{\rm id}_T\right\}$ is a finite tree and the diameter of these fixed point sets is uniformly bounded, then $\Phi$ is continuous or $\Phi$ has a global fixed point.
\end{propositionA}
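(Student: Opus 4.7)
The plan is to deduce Proposition A as a direct consequence of the Main Theorem. The core of the argument is an interpretation of the simplicial tree $T$ as a complete \CAT\ space, followed by a translation of the tree-theoretic hypotheses on fixed point sets into the \CAT\ language.

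First I would view $T$, endowed with the path metric assigning length $1$ to every edge, as a complete \CAT\ space on which $\Phi(L)$ acts by isometries. The assumption ``without inversion'' is precisely what is needed to ensure that, for any elliptic isometry $\Phi(l)$, the \CAT-theoretic fixed point set coincides with the simplicial fixed point set: an inversion would metrically fix only the midpoint of an edge, which is not a vertex. Thus $\Phi$ becomes an abstract action of $L$ on a \CAT\ space by isometries, to which the Main Theorem is meant to apply.

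Next I would match hypotheses. An isometry of a \CAT\ space is called elliptic precisely when its fixed point set is non-empty; in a tree this non-empty closed convex set is a subtree. The assumption that $\mathrm{Fix}(\Phi(l))$ is a \emph{finite} subtree of \emph{uniformly bounded} diameter therefore gives exactly the quantitative input one expects the Main Theorem to require, namely a uniform bound on the diameter of the fixed point sets of the elliptic elements in $\Phi(L)\setminus\{\mathrm{id}_T\}$, together with their boundedness.

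Applying the Main Theorem then yields the desired dichotomy: either $\Phi$ is continuous, or there is a global fixed point. The main obstacle is of course hidden in the Main Theorem itself, whose proof presumably combines Van Dantzig's theorem and Iwasawa's structure theorem for locally compact groups with \CAT\ fixed-point techniques. In this specialization, the only subtlety is verifying that ``no inversion'' aligns the simplicial and metric notions of fixed points, so that the global fixed point produced by the Main Theorem is in fact a vertex of $T$, and that the finiteness of the subtrees does not need to be used separately (it is subsumed by the uniform diameter bound on a simplicial tree).
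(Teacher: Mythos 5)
Your proposal correctly identifies that Proposition A is meant to be a specialization of the Main Theorem (the paper offers no separate proof, asserting it ``immediately follows''), but it misstates both what the Main Theorem asks for and what it delivers, and the missing steps are exactly where the hypotheses of Proposition A do their work. The Main Theorem does \emph{not} have ``a uniform bound on the diameters of fixed point sets'' among its hypotheses; what must be verified are (i) semi-simplicity, (ii) a positive infimum of translation lengths, (iii) global fixed points for finitely generated subgroups acting elliptically, (iv) the finite-intersection-property condition, and, for part (3), the existence of maximal elements in subfamilies of $\left\{{\rm Fix}(\Phi(H))\mid H \text{ closed}\right\}$. For a simplicial action without inversion, (i) and (ii) come from Bridson's semisimplicity theorem (translation lengths are positive integers), (iii) is Serre's fixed point theorem for finitely generated groups acting on trees, and (iv) together with the maximal-element condition is where finiteness and the uniform diameter bound actually enter (for instance, a downward directed family of non-empty \emph{finite} trees has non-empty intersection). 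None of this verification appears in your write-up; instead you check a hypothesis the theorem does not have.

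More seriously, the conclusion of Main Theorem (3) is not ``continuous or a global fixed point'' but ``continuous or $\Phi$ preserves a non-empty \emph{proper} fixed point set ${\rm Fix}(\Phi(H'))$ of a closed subgroup $H'$.'' To upgrade the second alternative to the conclusion of Proposition A you must observe that, since ${\rm Fix}(\Phi(H'))$ is proper, $\Phi(H')$ is nontrivial, so ${\rm Fix}(\Phi(H'))\subseteq{\rm Fix}(\Phi(h))$ for some nontrivial elliptic $\Phi(h)$ and is therefore a finite tree of uniformly bounded diameter; being $\Phi$-invariant, it contains a bounded orbit, and the Bruhat--Tits bounded-orbit theorem (Proposition \ref{boundedOrbit}(i)) then yields a global fixed point. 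This bridge is the entire content of the specialization and is absent from your argument. Finally, your parenthetical claim that finiteness of the subtrees ``is subsumed by the uniform diameter bound'' is false: an infinite star is a subtree of diameter $2$, so bounded diameter does not imply finiteness, and the two hypotheses are independent.
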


As an application we obtain a geometric proof of the result by Morris and Nickolas.
\begin{corollaryB}
Any abstract group homomorphism from a locally compact group into a free product is continuous or the image of the homomorphism is conjugate to a subgroup of one of the factors.
\end{corollaryB}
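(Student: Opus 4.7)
The plan is to reduce Corollary B to Proposition A by means of Bass--Serre theory. Let $\varphi\colon L\to A*B$ be an abstract group homomorphism from a locally compact group into a free product, and let $T_{A*B}$ be the Bass--Serre tree described in the introduction. The natural left-multiplication action of $A*B$ on $T_{A*B}$ is simplicial and without inversion, and the induced homomorphism $\iota\colon A*B\hookrightarrow {\rm Isom}(T_{A*B})$ is injective (since inside a free product one has $A\cap B=\{e\}$, and only the identity stabilizes both $\langle A\rangle$ and $\langle B\rangle$). I would therefore consider the abstract action
\[
\Phi := \iota\circ\varphi\colon L\longrightarrow {\rm Isom}(T_{A*B}).
\]

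The next step is to verify the hypotheses of Proposition A. In $T_{A*B}$ the vertex stabilizers are conjugates of $A$ or $B$, while the edge stabilizers are trivial (the amalgamated subgroup of a free product is trivial). Consequently, if $\Phi(l)\in\Phi(L)-\{{\rm id}_T\}$ is elliptic and fixes two distinct vertices, then it fixes the geodesic between them, and in particular some edge of $T_{A*B}$, forcing $\Phi(l)={\rm id}_T$, a contradiction. Hence ${\rm Fix}(\Phi(l))$ consists of a single vertex, which is trivially a finite tree of diameter $0$, and the uniform boundedness condition of Proposition A is satisfied automatically.

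Proposition A then yields two alternatives. If $\Phi$ is continuous, then because ${\rm Isom}(T_{A*B})$ carries the discrete topology and $\iota$ is injective, $\ker(\varphi)=\ker(\Phi)$ is open in $L$, so $\varphi$ is continuous with respect to the discrete topology on $A*B$. If instead $\Phi$ has a global fixed point $p\in T_{A*B}$, then since the action is without inversion, $p$ can be taken to be a vertex (should $p$ lie in the interior of an edge, both endpoints are also fixed). The $A*B$-stabilizer of such a vertex is a conjugate of $A$ or $B$, so $\varphi(L)$ is conjugate to a subgroup of one of the factors. The only non-routine point in the whole argument is the observation that edge stabilizers in $T_{A*B}$ are trivial, so that fixed-point sets of non-trivial elliptic isometries collapse to single vertices; once this is noted, Corollary B follows from Proposition A essentially for free.
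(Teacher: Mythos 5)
Your proof is correct and takes essentially the same route as the paper: act on the Bass--Serre tree, observe that the trivial edge stabilizers force the fixed point set of every non-trivial elliptic isometry to be a single vertex (so the hypotheses of Proposition A hold with uniform diameter $0$), and translate ``global fixed point'' back into ``$\varphi(L)$ lies in a vertex stabilizer, i.e.\ a conjugate of a factor.'' Like the paper's own discussion, you write out only the two-factor case, but the argument extends verbatim to free products with arbitrarily many factors.
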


Inspired by the fact that simplicial trees belong to the class of finite dimensional \CAT\ cube complexes, we ask the following more general question: 

\begin{quote}
{\em Under which conditions is an abstract group action of a locally compact group on a finite dimensional \CAT\ cube complex continuous or has a global fixed point?}
\end{quote}

We obtain the following statement for actions on \CAT\ cube complexes which is a special case of our Main Theorem.
\begin{propositionC}
Let $\Phi\colon L\to{\rm Isom}(X)$ be an abstract group action of a locally compact group $L$ on a finite dimensional \CAT\ cube complex $X$ by cubical isometries. 

If $\Phi(l)\neq id_X$  is a hyperbolic isometry for every $l\in L$, then $\Phi$ is continuous.
\end{propositionC}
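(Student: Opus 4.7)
Since ${\rm Isom}(X)$ is equipped with the discrete topology, continuity of $\Phi$ is equivalent to $\ker\Phi$ being open in $L$, so the plan is to locate an open subgroup of $L$ inside $\ker\Phi$. The crucial geometric input is the discreteness of translation lengths for cubical isometries, due to Haglund: every hyperbolic cubical isometry of $X$ has rational translation length with denominator bounded in terms of $\dim X$, so positive cubical translation lengths are bounded below by some $\tau_{\min} > 0$. Combined with the hypothesis that $\Phi(l) \neq {\rm id}_X$ implies $\Phi(l)$ is hyperbolic, this yields the dichotomy $\tau(\Phi(l)) = 0$ or $\tau(\Phi(l)) \geq \tau_{\min}$ for every $l \in L$.

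First I would treat the totally disconnected case. By van Dantzig's theorem, $L$ has a compact open subgroup $K$, and it suffices to show $\Phi(K) = \{{\rm id}_X\}$. Fix $k \in K$ and set $H = \overline{\langle k \rangle}$, a compact monothetic abelian group. The key identity is the multiplicativity of translation length, $\tau(\Phi(y^n)) = n\,\tau(\Phi(y))$: if $x = y^n$ in $L$ with $\Phi(x) \neq {\rm id}_X$, then $\Phi(y) \neq {\rm id}_X$ and $\Phi(y)$ is hyperbolic, so $\tau(\Phi(x)) \geq n\,\tau_{\min}$. Thus whenever $x$ admits $n$-th roots for arbitrarily large $n$, we must have $\Phi(x) = {\rm id}_X$. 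The connected compact abelian subgroup $H^0 \leq H$ is divisible, hence by Baer's theorem an abstract direct summand: $H = H^0 \oplus R$ with $R$ profinite. Writing $R = \prod_p P_p$ as the product of its pro-$p$ Sylow subgroups, one has the abstract direct-sum splitting $R = P_p \oplus \prod_{q \neq p} P_q$ for each prime $p$. The cable argument then applies to each summand: every element of $H^0$ admits $n$-th roots for every $n$; every element of $P_p$ admits $n$-th roots for every $n$ coprime to $p$; and every element of $\prod_{q \neq p} P_q$ admits $p^k$-th roots for every $k$. Hence $\Phi$ vanishes on each summand, and reassembling gives $\Phi(k) = {\rm id}_X$ for every $k \in K$.

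For a general locally compact $L$, I would use the structure theory of connected locally compact groups to handle $L^0$: modulo a compact normal subgroup (already killed by the compact argument), $L^0$ is a connected Lie group, which is abstractly generated by its one-parameter subgroups, each isomorphic to $\mathbb{R}$ or $\mathbb{T}$ and hence divisible; the same cable argument forces $\Phi(L^0) = \{{\rm id}_X\}$. Applying the totally disconnected case to a compact open subgroup of $L/L^0$ then yields an open subgroup of $L$ contained in $\ker\Phi$. I expect the most delicate point to be the verification that the abelian-group-theoretic direct-sum decompositions of $H$ (Baer splitting followed by Sylow splitting) are genuinely respected by the possibly discontinuous $\Phi$, so that each divisibility step can be invoked without tacitly using continuity.
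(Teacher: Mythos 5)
Your proof is correct, but it takes a genuinely different route from the paper's. The paper obtains Proposition C as a special case of Main Theorem (3): conditions (i) and (ii) are exactly Bridson's semi-simplicity and positive lower bound on translation lengths for cubical isometries of a finite dimensional cube complex, and conditions (iii), (iv) and the maximality condition hold trivially because under your hypothesis every fixed point set is either empty or all of $X$; since no non-empty \emph{proper} invariant fixed point set can then exist, the dichotomy of the Main Theorem collapses to continuity. The engine behind that route is Theorem \ref{abelianNoZ} (abelian groups without a quotient $\mathbb{Z}$ act elliptically, proved via iterated splitting of minsets and the classification of closed subgroups of $\mathbb{R}^n$), together with Iwasawa's and van Dantzig's theorems. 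You bypass the elliptic/fixed-point analysis altogether by using the translation length as a discrete, power-multiplicative norm on the image (the identity $|f^n|=n|f|$ for semi-simple $f$ combined with the uniform lower bound), so that any element admitting roots of unbounded order must lie in $\ker\Phi$; divisibility of connected compact abelian groups, the Sylow decomposition of profinite abelian groups, and generation of connected Lie groups by one-parameter subgroups then kill $\overline{\langle k\rangle}$ for every $k$ in a compact subgroup as well as $L^{\circ}$, and van Dantzig supplies the open subgroup inside the kernel. Your worry about whether a discontinuous $\Phi$ respects the splittings is unfounded: they are abstract direct-sum decompositions, and every homomorphism respects those. In effect you are re-proving Proposition \ref{EpiZ} (compact and connected locally compact groups admit no epimorphism onto $\mathbb{Z}$) by hand and feeding it directly into Dudley's norm argument; this is shorter, avoids the fixed-point and finite-intersection machinery entirely, and needs finite dimensionality only to secure the lower bound on translation lengths --- but it works only because ``elliptic implies trivial'' under your hypothesis, so it could not be upgraded to the full Main Theorem, Proposition A, or Corollary E, where genuine fixed-point sets must be produced and compared. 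One minor caveat: the rationality of translation lengths you attribute to Haglund concerns the combinatorial metric and is more than you need; the fact actually used, $\inf\left\{|\Phi(l)|>0\mid l\in L\right\}>0$, is exactly Bridson's theorem that the paper cites for condition (ii).
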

For example, the free abelian group $\mathbb{Z}^d$ acts freely on $\mathbb{R}^d$ equipped with canonical cubical structure. Thus, any abstract group homomorphism from a locally compact group into $\mathbb{Z}^d$ is continuous.

Let us mention one more application of Proposition C. Right-angled Artin groups are combinatorial generalizations of free and free abelian groups. It is known that a right-angled Artin group $A_\Gamma$ acts freely on the universal cover of the corresponding Salvetti complex $S_\Gamma$ which is a finite dimensional \CAT\ cube complex \cite[Theorem 3.6]{Charney}. Hence, as an immediate application of Proposition C we obtain a geometric proof for the following result, which can be proved by using Dudley's arguments in \cite{Dudley} and can be found in \cite[Cor. 3.13]{CorsonKazachkov}.
\begin{corollaryD}
Any abstract group homomorphism from a locally compact group into a right-angled Artin group is continuous. 
\end{corollaryD}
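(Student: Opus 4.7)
The plan is to reduce Corollary D directly to Proposition C via the canonical free cubical action of a right-angled Artin group on the universal cover of its Salvetti complex. Let $\varphi\colon L\to A_\Gamma$ be an abstract group homomorphism from a locally compact group into a right-angled Artin group, and let $\rho\colon A_\Gamma\to\mathrm{Isom}(\widetilde{S}_\Gamma)$ denote the action of $A_\Gamma$ on the universal cover $\widetilde{S}_\Gamma$ of the Salvetti complex. By \cite[Theorem 3.6]{Charney}, $\widetilde{S}_\Gamma$ is a finite dimensional \CAT\ cube complex and $\rho$ is a free action by cubical isometries; in particular $\rho$ is injective. I then form the composite abstract action $\Phi := \rho\circ\varphi\colon L\to\mathrm{Isom}(\widetilde{S}_\Gamma)$.

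The next step is to verify the hypothesis of Proposition C for $\Phi$. Cubical isometries of a finite dimensional \CAT\ cube complex are semi-simple, so every such isometry is either elliptic (and has a fixed point) or hyperbolic (with positive translation length). Since the $A_\Gamma$-action is free, no non-trivial element of $\rho(A_\Gamma)$ can be elliptic; hence every non-trivial element of $\rho(A_\Gamma)$ is hyperbolic. Combined with injectivity of $\rho$, this gives that $\Phi(l)$ is either $\mathrm{id}_{\widetilde{S}_\Gamma}$ (precisely when $l\in\ker\varphi$) or a hyperbolic isometry.

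Proposition C therefore applies and yields that $\Phi$ is continuous. Since $\mathrm{Isom}(\widetilde{S}_\Gamma)$ is endowed with the discrete topology throughout this paper, continuity of $\Phi$ is equivalent to $\ker\Phi$ being open in $L$. By injectivity of $\rho$, $\ker\Phi=\ker\varphi$, so $\ker\varphi$ is open, which is precisely continuity of $\varphi$ with respect to the discrete topology on $A_\Gamma$. If, in the final step, one wishes to deduce continuity of $\varphi$ when $A_\Gamma$ carries a finer topology, this follows immediately, as the discrete topology is the finest group topology on $A_\Gamma$.

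I do not anticipate a serious obstacle: once Proposition C is in hand, the only content is the observation that a free cubical action on a finite dimensional \CAT\ cube complex produces only hyperbolic non-trivial isometries, which is a standard consequence of the semi-simplicity of cubical isometries. The substantive geometric input has already been spent in proving the Main Theorem and Proposition C; Corollary D is then a one-line application.
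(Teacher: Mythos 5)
Your proposal is correct and follows exactly the route the paper intends for Corollary D: it is stated there as an immediate application of Proposition C via the free cubical action of $A_\Gamma$ on the universal cover of the Salvetti complex \cite[Theorem 3.6]{Charney}, with freeness plus semi-simplicity of cubical isometries \cite{Bridson} forcing every non-trivial image element to be hyperbolic. Your write-up merely makes explicit the details (injectivity of $\rho$, openness of the kernel) that the paper leaves implicit.
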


We want to point out that an action on a finite dimensional \CAT\ cube complex via cubical isometries has the following properties:
\begin{itemize}
\item the action is semi-simple \cite{Bridson}, 
\item the infimum of the translation lengths of hyperbolic isometries is positive \cite{Bridson},
\item any action of a finitely generated group that is via elliptic isometries has a global fixed point \cite{LederVarghese}.
\end{itemize}

This article is dedicated to the more general situation, where we study abstract group actions of locally compact groups on arbitrary \CAT\ spaces with finite flat rank (i.e. there is a bound on the dimension of isometrically embeddable Euclidean space). We prove the following result. 
\begin{maintheorem}
Let $\Phi\colon L\to{\rm Isom}(X)$ be an  abstract group action of a locally compact group $L$ on a complete \CAT\ space $X$ of finite flat rank. 

\begin{enumerate}
\item If $L$ is almost connected (i.e. $L/L^{\circ}$ is compact) and
\begin{enumerate}
\item[(i)] the action is semi-simple,
\item[(ii)] the infimum of the translation lengths of hyperbolic isometries is positive,
\item[(iii)] any finitely generated subgroup of $L$ which acts on $X$ via elliptic isometries has a global fixed point,
\item[(iv)] any subfamily of $\left\{{\rm Fix}(\Phi(l))\mid l\in L\right\}$ with the finite intersection property has a non-empty intersection.
\end{enumerate}
Then $\Phi$ has a global fixed point. 
\item If $L$ is totally disconnected and 
\begin{enumerate} 
\item[(v)] the poset $\left\{{\rm Fix}(\Phi(K))\neq\emptyset\mid K\subseteq L\text{ compact open subgroup}\right\}$ is non-empty and has a maximal element, 
\end{enumerate}
then $\Phi$ is continuous or $\Phi$ preserves a non-empty proper fixed point set ${\rm Fix}(\Phi(K'))$ of a compact open subgroup $K' \subseteq L$.
\item In particular, if $\Phi$ satisfies properties (i)-(iv) and any subfamily of the poset $\left\{{\rm Fix}(\Phi(H))\mid H\subseteq L \text{ closed subgroup}\right\}$ has a maximal element, then $\Phi$ is continuous or $\Phi$ preserves a non-empty proper fixed point set ${\rm Fix}(\Phi(H'))$ of a closed subgroup $H'\subseteq L$. 
\end{enumerate}
\end{maintheorem}

As mentioned before, any action on a tree satisfies properties (i)-(iii). We note that property (iv) is indeed essential, since even the connected locally compact group $\mathbb{R}$ admits an action on a simplicial tree without a global fixed point: There exists an epimorphism $\mathbb{R}\overset{\varphi}{\twoheadrightarrow}\mathbb{Q}$, see Examples \ref{NotContinuous}. Since $\mathbb{Q}$ is denumerable, it is the union of an increasing sequence of finitely generated subgroups $(Q_i)_{i \in I}$. Thus by \cite[Theorem 15]{Serre} the group $\mathbb{Q}$ possesses an action without a global fixed point on a tree $T_{\mathbb{Q}}$ that is constructed out of $(Q_i)_{i \in I}$.  Hence, the action $\mathbb{R}\overset{\varphi}{\twoheadrightarrow}\mathbb{Q}\overset{\psi}{\rightarrow}{\rm Isom}(T_{\mathbb{Q}})$ has no global fixed point. 

Additionally, condition (v) is also necessary as the following example shows. Let $T_3$ be the $3$-regular tree and ${\rm Isom}(T_3)$ its isometry group. The topology of pointwise convergence on vertices gives ${\rm Isom}(T_3)$ the structure of a non-discrete totally disconnected locally compact group. Thus the action $({\rm Isom}(T_3),\mathcal{T}_{pc})\overset{{\rm id}}{\rightarrow}({\rm Isom}(T_3),\mathcal{T}_{discrete})$ is not continuous and there exists no proper invariant fixed point set of a compact open subgroup. But it does not satisfy condition (v): Fixed point sets are convex, so any fixed point set of a compact open subgroup is itself a tree. In the topology of pointwise convergence of vertices the fixed point set of a compact open subgroup has to be bounded. So the fixed point sets of compact open subgroups are precisely the finite subtrees, as every finite subtree is also the fixed point set of a compact open subgroup. Thus there exists no maximal element in the poset of fixed point sets of compact open subgroups.

Before we proceed with an application concerning \CAT\ groups we discuss an example that shows that the condition (v) is not sufficient to get  a {\it global fixed point} or a continuous action. First we construct a homomorphism $\prod\limits_{\mathbb{N}}\Z/2\Z\overset{\Phi}{\twoheadrightarrow}\mathbb{Z}/2\mathbb{Z}$ that is not continuous, see Example \ref{NotContinuous}(iii). Further, the group $\mathbb{Z}/2\mathbb{Z}\times\mathbb{Z}= \langle a, b \mid aba^{-1}b^{-1}, a^2\rangle$ acts on the tree $T$ below as follows: $a$ acts as a reflection on the line, i.e. $a$ maps a red vertex $v$ to a blue vertex $w$ such that the distance between $v$ and $w$ is $2$ and acts on the black vertices as identity and  $b$ acts as a translation on the line. 

\begin{figure}[h]
\begin{center}
\begin{tikzpicture}
\draw (0,0)--(3,0);
\draw[dashed] (-0.7,0)--(0,0);
\draw[dashed] (3,0)--(3.7,0);
\draw (0,-1)--(0,1);
\draw (1,-1)--(1,1);
\draw (2,-1)--(2,1);
\draw (3,-1)--(3,1);
\draw[fill=black] (0,0) circle (1pt);
\draw[fill=black] (1,0) circle (1pt);
\draw[fill=black] (2,0) circle (1pt);
\draw[fill=black] (3,0) circle (1pt);
\draw[fill=red, draw=red] (0,1) circle (1.3pt);
\draw[fill=red, draw=red] (1,1) circle (1.3pt);
\draw[fill=red, draw=red] (2,1) circle (1.3pt);
\draw[fill=red, draw=red] (3,1) circle (1.3pt);
\draw[fill=blue, draw=blue] (0,-1) circle (1.3pt);
\draw[fill=blue, draw=blue] (1,-1) circle (1.3pt);
\draw[fill=blue, draw=blue] (2,-1) circle (1.3pt);
\draw[fill=blue, draw=blue] (3,-1) circle (1.3pt);
\node at (-6,0) {$\prod\limits_{\mathbb{N}}\Z/2\Z\times\Z\overset{\Phi\times{\rm id}}\twoheadrightarrow\Z/2\Z\times\Z$};
\node at (-2.5,0) {\Large{$\curvearrowright$}};
\node at (-1.5,0) {$T=$};
\end{tikzpicture}
\end{center}
\end{figure}
The group action
$\prod\limits_{\mathbb{N}}\Z/2\Z\times\mathbb{Z}\overset{\Phi\times{\rm id}}\twoheadrightarrow \mathbb{Z}/2\Z\times\mathbb{Z}\hookrightarrow{\rm Isom}(T)$
is not continuous and has no global fixed point, but it preserves a non-empty proper fixed point set of the compact open subgroup $\prod\limits_{\mathbb{N}}\Z/2\Z\times\left\{0\right\}$.

The Main Theorem has fairly broad applications, since a large class of groups is known to act nicely on \CAT\ spaces. 
\begin{corollaryE}
Any abstract group homomorphism $\varphi\colon L\to G$ from a locally compact group $L$ into a \CAT\ group $G$ whose torsion groups are finite is continuous unless the image $\varphi(L)$ is contained in the normalizer of a finite non-trivial subgroup of $G$.

In particular, any abstract group homomorphism from a locally compact group into
\begin{enumerate}
\item[(i)] a right-angled Artin group,
\item [(ii)] a limit group
\end{enumerate}
is continuous.
\end{corollaryE}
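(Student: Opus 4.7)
The plan is to apply Main Theorem (3) to the composition $\Phi\colon L \xrightarrow{\varphi} G \hookrightarrow \mathrm{Isom}(X)$, where $X$ is a complete \CAT\ space on which the \CAT\ group $G$ acts geometrically. Since $G$ carries the discrete topology, continuity of $\varphi$ is equivalent to continuity of $\Phi$, so it suffices to establish the dichotomy for $\Phi$. The space $X$ has finite flat rank, and conditions (i) and (ii) of the Main Theorem are inherited from the geometric $G$-action via Bridson's results recalled in the introduction.

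The assumption that torsion subgroups of $G$ are finite is what I would use to verify conditions (iii) and (iv). For (iii), if $H \leq L$ is a finitely generated subgroup with every $\Phi(h)$ elliptic, then properness of the $G$-action forces each $\Phi(h)$ to have finite order, so $\Phi(H)$ is a torsion, hence finite, subgroup of $G$, and the Bruhat--Tits fixed point theorem produces a global fixed point. For (iv), given a subfamily $\mathcal{F} \subseteq \{\mathrm{Fix}(\Phi(l)) : l \in L\}$ with the finite intersection property, I would set $S := \{l \in L : \mathrm{Fix}(\Phi(l)) \in \mathcal{F}\}$ and argue that the subgroup $\Gamma := \langle \Phi(S)\rangle \leq G$ is torsion: any element of $\Gamma$ involves only finitely many $\Phi(l_1),\ldots,\Phi(l_n)$, which by the finite intersection property share a common fixed point $x$ and hence sit inside the finite stabilizer $G_x$. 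Thus $\Gamma$ is finite by hypothesis, and Bruhat--Tits furnishes a point in $\bigcap \mathcal{F}$.

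The step I expect to be the main technical point is verifying the poset hypothesis of Main Theorem (3), namely that every subfamily of $\{\mathrm{Fix}(\Phi(H)) : H \leq L \text{ closed}\}$ has a maximal element. My key observation would be that every proper non-empty element $F$ of this poset equals $X^{K_F}$, where $K_F$ is the pointwise stabilizer of $F$ in $G$; picking any $x \in F$ shows $K_F \subseteq G_x$, so $K_F$ is finite. The assignment $F \mapsto K_F$ is an order-reversing injection on the proper non-empty part of the poset, so a strictly ascending chain of fixed point sets would force a strictly descending chain of finite subgroups of $G$, which is impossible since orders drop strictly at each step. Together with handling $F = X$ and $F = \emptyset$ separately, this yields maximal elements in arbitrary subfamilies.

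Main Theorem (3) then gives the dichotomy: either $\Phi$ is continuous, or $\Phi$ preserves a proper non-empty fixed point set $F = \mathrm{Fix}(\Phi(H'))$. In the second case, I would take $K$ to be the pointwise $G$-stabilizer of $F$, which is finite by the argument above and non-trivial because $F \subsetneq X$ forces $\Phi(H') \neq \{e\}$. The invariance of $F$ under $\Phi(L)$ then yields $\varphi(L) \subseteq N_G(K)$ by a short direct computation. Finally, for the two stated applications I would invoke that right-angled Artin groups act on universal covers of Salvetti complexes and that limit groups act geometrically on \CAT\ spaces (by Alibegovi\'c--Bestvina); both classes are torsion-free, so the alternative is vacuous and $\varphi$ is always continuous.
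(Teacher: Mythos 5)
Your proposal is correct and follows the paper's overall skeleton (compose $\varphi$ with the geometric $G$-action on a proper \CAT\ space $X$ of finite flat rank and feed the resulting $L$-action into Main Theorem (3), verifying (i)--(ii) via Bridson and (iii)--(iv) via properness plus finiteness of torsion subgroups), but two of your sub-arguments genuinely differ from ours, and in both places your route is arguably cleaner. For the maximal-element hypothesis we invoke the fact that a \CAT\ group has only finitely many conjugacy classes of finite subgroups and derive a contradiction from an infinite ascending chain using the conjugation formula for fixed point sets; your order-reversing injection $F\mapsto K_F$ into the finite point stabilizers, resting on the identity $F=\mathrm{Fix}(K_F)$, avoids that input entirely and needs only properness of the action. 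In the non-continuous case we do not pass to the pointwise stabilizer of the invariant set: we split into the cases $\mathrm{Fix}(\Phi(L^{\circ}))\neq X$ and $\mathrm{Fix}(\Phi(L^{\circ}))=X$, and in the latter choose among the maximal fixed-point sets of (preimages of) compact open subgroups one whose image has minimal order, in order to upgrade equality of fixed-point sets to equality of groups; your observation that the full pointwise $G$-stabilizer $K$ of the $\Phi(L)$-invariant set $F$ is finite (as $F\neq\emptyset$), non-trivial (as $F\neq X$ forces $\Phi(H')\neq\{\mathrm{id}_X\}$ and $\Phi(H')\subseteq K$), and normalized by $\varphi(L)$ shortcuts this case analysis. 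The one point you should add: our definition of a \CAT\ group does not require the geometric action $\psi\colon G\to\mathrm{Isom}(X)$ to be faithful, so before writing $G\hookrightarrow\mathrm{Isom}(X)$ you must dispose of the case $\ker\psi\neq\{e_G\}$; there $\ker\psi$ is a finite non-trivial normal subgroup by properness, so $\varphi(L)\subseteq G=N_G(\ker\psi)$ and the conclusion holds trivially. With that remark added your argument is complete, and your treatment of the torsion-free examples (right-angled Artin groups and limit groups) matches ours.
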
 

We note that the result of Corollary E was also shown in \cite[Theorem D]{KramerVarghese} using algebraic properties of \CAT\ groups. More precisely, it was shown in \cite{KramerVarghese} that any abstract group homomorphism $\varphi\colon L\to G$, where $G$ does not have an infinite torsion subgroup and every abelian subgroup is a direct sum of cyclic groups, is continuous or the image of $\varphi$ is contained in the normalizer of a finite non-trivial subgroup. Using the fact that an abstact group homomorphism $\varphi:L\rightarrow H$ is continuous if and only if $H$ is torsion-free and does not include $\mathbb{Q}$ or the $p$-adic integers $\mathbb{Z}_p$ for any prime $p$ \cite{CorsonVarghese}, the result in \cite{KramerVarghese} was recently generalized in \cite{Wenzel} to a larger class consisting of all groups $G$ with the following properties: torsion subgroups of $G$ are finite and $\mathbb{Q}$ and the $p$-adic integers $\mathbb{Z}_p$ for any prime $p$ are not a subgroup of $G$. \\

{\bf Outline of the proof of the Main Theorem.} For the first part we show that any action of an abelian group without epimorphisms to $\mathbb{Z}$ on any \CAT\ space with finite flat rank has to be via elliptic isometries. Applying \CAT\ geometry and Iwasawa's Structure Theorem of connected locally compact groups it follows that any action of an almost connected locally compact group on a complete \CAT\ space of finite flat rank has a global fixed point. The second statement follows with an application of a theorem by van Dantzig. We obtain the third result by combining the first and second statement.

We would like to point out that our methods rely heavily on the structure theory for locally compact groups and therefore are not applicable for general completely metrizable groups. We conjecture that a suitable version of the Main Theorem holds for all competely metrizable groups, moreover for all \v{C}ech-complete groups.\\

{\bf Acknowledgment.} The authors thank Linus Kramer for fruitful discussions concerning automatic continuity and non-positive curvature and for helpful comments on a preprint of this paper.

\section{Locally compact groups}
For the study of automatically continuous group homomorphisms we need a group and a topological structure on both sides. To make this combination fruitful, these structures need to harmonize in the following way:
\begin{definition}
  Let $(G,\cdot )$ denote a group and $\mathcal{T}$ a topology on $G$. If the maps $m\colon G\times G\to G$, $m(x,y):=x\cdot y$ and $i\colon G\to G$, $i(x):=x^{-1}$ are continuous with respect to the topology $\mathcal{T}$, then we call $(G,\cdot,\mathcal{T})$ a \textit{topological group}. 
\end{definition}
We will omit the topology from the notation if it does not need to be specified.  
\begin{convention} We assume that topological groups have the Hausdorff property.
\end{convention}
We demand the Hausdorff property as topological groups are Hausdorff if and only if singletons are closed. Topological spaces without that property are in some sense too wild for our discussions.

To obtain nice results about automatic continuity, we need to put some more restrictions on the topologies that our groups carry. A well-studied class of groups is the class consisting of the locally compact groups. Our main reference for these groups is \cite{Stroppel}.
\begin{definition}
  A topological group $(G,\cdot,\mathcal{T})$ is called \textit{(locally) compact}, if the topological space $(G,\mathcal{T})$ is  (locally) compact.
\end{definition}
In general, if $\mathcal{\varepsilon}$ is a topological property, we say that a topological group $G$ \textit{has property} $\mathcal{\varepsilon}$, if the topological space $(G,\mathcal{T})$ has $\mathcal{\varepsilon}$.

Locally compact groups are a natural generalization of real Lie groups and have many interesting examples from different areas of mathematics.
\begin{examples} The following groups are locally compact groups:
  \begin{enumerate}
  \item[(i)] Every group endowed with the discrete topology.
  \item[(ii)] The additive group $\mathbb{R}$ with the standard topology.
  \item[(iii)] The general linear group ${\rm GL}_n(\mathbb{R})$ with the standard topology, more generally: every real Lie group.
  \item[(iv)] The product $\prod\limits_{\mathbb{N}} \Z/2\Z$ with the product topology where each $\Z/2\Z$ carries the discrete topology.
  \item[(v)] The group $\Z_p$ of $p$-adic integers with the $p$-adic topology.
  \end{enumerate}
\end{examples}
In many cases objects in a category are studied with the maps that keep the structure. In this case, these maps are the continuous homomorphisms. We slightly deviate from this process and give the following definition.
\begin{definition}
  Let $G$ and $H$ be topological groups. A not necessarily continuous map $f\colon G\to H$ is called \textit{abstract homomorphism} if it is a group homomorphism from $G$ to $H$.
\end{definition}
Let us discuss some examples of continuous and discontinuous abstract homomorphisms.
\begin{examples}
\label{Homeo}
  The following homomorphisms are continuous:
  \begin{enumerate}
  \item[(i)] The identity map ${\rm id}\colon (\mathbb{R},+,\mathcal{T}_{discrete})\to (\mathbb{R},+,\mathcal{T}_{standard})$.
  \item[(ii)] For any familiy of topological groups $\left(G_i\right)_{i\in I}$, the projection ${\rm pr}_k\colon \prod\limits_{i\in I} G_i\to G_k$ onto the $k$-th coordinate.
  \item[(iii)]   Let $G$ be a topological group. For every $g\in G$, the conjugation map $\gamma_g\colon G\to G$, $x\mapsto gxg^{-1}$.
  \end{enumerate}
\end{examples}

Not all abstract homomorphisms are continuous as the following examples show. Some of these examples are used as counterexamples in the introduction.
\begin{example}
\label{NotContinuous}
 \ \begin{enumerate}
  \item[(i)] The identity map ${\rm id}\colon (\mathbb{R},+,\mathcal{T}_{standard})\to (\mathbb{R},+,\mathcal{T}_{discrete})$ is not continuous even though it is an isomorphism with continuous inverse. 
  \item[(ii)] Consider $\mathbb{R}$ as a $\mathbb{Q}$ vector space with a basis $\{b_i\mid i\in I\}$ with $b_1=1$. We define $\varphi \colon (\mathbb{R},+,\mathcal{T}_{standard}) \twoheadrightarrow (\mathbb{Q},+,\mathcal{T}_{standard})$ as $\varphi(b_j)=\begin{cases}
  1 \quad\textit{if }j=1\\
  0 \quad\textit{else}
  \end{cases}$ and we extend linearly. This obviously defines a surjective homomorphism which can never be continuous since $\mathbb{R}$ is connected and $ \mathbb{Q}$ is totally disconnected.
  \item[(iii)] Consider the compact group $\prod\limits_{\mathbb{N}}\Z/2\Z$. This is a $\Z/2\Z$ vector space and $\bigoplus\limits_{\mathbb{N}} \Z/2\Z$ is a linear subspace. Consider the linear map $\psi\colon \bigoplus\limits_{\mathbb{N}} \Z/2\Z\to \Z/2\Z$ defined by $\psi\left((x_n)_{n\in \mathbb{N}}\right):=\sum_{n\in \mathbb{N}} x_n$. We can extend any basis of $\bigoplus\limits_{\mathbb{N}} \Z/2\Z$ to a basis of $\prod\limits_{\mathbb{N}}\Z/2\Z$ and extend $\psi$ to a linear map $\Phi\colon \prod\limits_{\mathbb{N}}\Z/2\Z\to \Z/2\Z$ by mapping every new basis vector to $0$. This means we have the following diagram:\\
  \begin{center}
  \begin{tikzpicture}
  \coordinate[label=above:$\prod\limits_{\mathbb{N}}\Z/2\Z$] (A) at (0,-0.3);
  \coordinate[label=above:$\Z/2\Z$] (B) at (4,0);
  \coordinate[label=below:$\bigoplus\limits_{\mathbb{N}} \Z/2\Z$] (C) at (0,-2);
 
  \draw[->>] (1,0.4) to node[midway, above]{$\Phi$} (3.3,0.4) ;
  \draw[->] (C) to (0,0.1);
  \draw[->>](0.5,-2) to node[midway, below]{$\psi$}(3.85,0.1);
  \end{tikzpicture}
  \end{center}
  The linearity of the map $\Phi$ guarantees that it is a group homomorphism. However, it can never be continuous, since $\{0\}$ is an open neighbourhood of $0\in \Z/2\Z$, so we would need to find an open set $V$ in $\prod\limits_{\mathbb{N}}\Z/2\Z$, which is mapped to $0$ if $\Phi$ was continuous. But then $V$ needs to contain an open set $U=A_1\times ... \times A_n\times \prod_{N>n}\Z/2\Z$. Then $x=(x_1,\ldots, x_n,y_1,0,0,\ldots)$ and $y=(x_1,\ldots, x_n,y_1+1,0,0,\ldots)$ lie in $U$ and $\Phi(x)=\Phi(y)+1$ contradicting $\Phi(V)\subseteq \{0\}$.
  \item[(iv)] There exists a discontinuous homomorphism $\xi\colon \Z_p\twoheadrightarrow \mathbb{Q}$, where $\Z_p$ is the compact group of $p$-adic integers, see \cite[Proposition 5.8]{Klopsch}. 
  \end{enumerate}
 
\end{example}
Between any two (topological) groups $G$ and $H$ there always exists a (continuous) homomorphism $\varphi\colon G\to H$ by sending every element to the neutral element $e_H$. However there may not exist surjective homomorphisms. As we just saw the group topology sometimes prohibits the existence of surjective continuous homomorphisms. This is however not the only obstacle, as some group structures do not allow surjective homomorphisms themselves. In our case the non-existence of abstract epimorphisms to $\Z$ will be important later on, so let's see some examples of such groups. Recall that a group $G$ is called $k$\textit{-divisible} for a natural number $k>1$ if for every $g\in G$ there exists an $h\in G$ such that $h^k=g$.

\begin{proposition}\label{EpiZ}
  The following groups do not have abstract epimorphisms onto $\Z$:
  \begin{enumerate}
  \item[(i)] $k$-divisible groups for $k>1$,
  \item[(ii)] compact groups,
  \item[(iii)] connected locally compact groups.
  \end{enumerate}
\end{proposition}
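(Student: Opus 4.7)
My plan is to prove the three parts in sequence, with (ii) and (iii) bootstrapping off (i) via structural decompositions. Part (i) is essentially formal: if $\varphi \colon G \twoheadrightarrow \Z$ were surjective with $G$ being $k$-divisible, picking $g \in G$ with $\varphi(g) = 1$ and writing $g = h^k$ gives $1 = k \cdot \varphi(h) \in \Z$, impossible for $k \geq 2$. Equivalently, $\varphi(G)$ inherits $k$-divisibility, but the only $k$-divisible subgroup of $\Z$ is $\{0\}$ since $\bigcap_{n \geq 1} k^n \Z = \{0\}$.

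For part (ii), I first reduce to the abelian case (since $\Z$ is abelian, $\varphi$ factors through the abelianization $G/[G,G]$), then use the short exact sequence $0 \to G^\circ \to G \to G/G^\circ \to 0$. The identity component $G^\circ$ is compact connected abelian, hence divisible by Pontryagin duality (its dual is a torsion-free discrete abelian group), so (i) forces $\varphi(G^\circ) = 0$ and $\varphi$ descends to the profinite abelian group $G/G^\circ$. Using the primary decomposition $G/G^\circ \cong \prod_p H_p$ with each $H_p$ pro-$p$, each $H_p$ is a $\Z_p$-module and hence $q$-divisible for every prime $q \neq p$, so (i) gives $\varphi(H_p) = 0$. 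For a general $g = (g_p)_p \in \prod_p H_p$ and any fixed prime $p_0$, the element $g - g_{p_0}$ lies in $\prod_{p \neq p_0} H_p$, which is $p_0$-divisible as a product of $p_0$-divisible groups, so (i) yields $\varphi(g - g_{p_0}) = 0$; combined with $\varphi(g_{p_0}) = 0$ this forces $\varphi(g) = 0$.

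For part (iii), I apply Iwasawa's structure theorem to a connected locally compact group $G$ to obtain a family of compact normal subgroups $\{K_\alpha\}$ with $\bigcap_\alpha K_\alpha = \{e\}$ such that each $G/K_\alpha$ is a connected Lie group. Part (ii) gives $\varphi(K_\alpha) = 0$ for every $\alpha$, so $\varphi$ factors through a connected Lie quotient $L := G/K_\alpha$. Since $L$ is connected, it is generated as an abstract group by its one-parameter subgroups $t \mapsto \exp(tX)$ for $X$ in the Lie algebra of $L$; each is a homomorphic image of the divisible abstract group $\mathbb{R}$, so (i) kills each generator and therefore $\varphi(L) = 0$.

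The main obstacle is the profinite portion of (ii): an infinite product such as $\hat{\Z} = \prod_p \Z_p$ is not $k$-divisible for any $k \geq 2$, so (i) cannot be applied to the whole group directly. The crux is the splitting $g = g_{p_0} + (g - g_{p_0})$, where the complementary factor $\prod_{p \neq p_0} H_p$ is $p_0$-divisible and hence killed by $\varphi$ via (i). Once (ii) is available, (iii) follows cleanly because connected Lie groups are generated by one-parameter subgroups, and abstract homomorphisms from the divisible group $\mathbb{R}$ to $\Z$ are trivial by (i).
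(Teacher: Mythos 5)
Your part (i) is exactly the paper's argument. For (ii) and (iii) you take a genuinely different route: the paper simply invokes Dudley's theorem that any abstract homomorphism from a compact (resp.\ connected locally compact) group to $\Z$ is automatically continuous, and then observes that a compact (resp.\ connected) subgroup of $\Z$ is trivial. Your plan is to avoid quoting automatic continuity and instead bootstrap everything off (i) via structure theory, which is more self-contained and closer in spirit to the geometric flavour of the paper; your argument for (iii) (kill the compact part by (ii), kill the one-parameter subgroups by divisibility of $\mathbb{R}$) is correct, and in fact with the product decomposition $L=H_1\cdots H_n\cdot K$ used in the paper one does not even need generation by one-parameter subgroups: $\varphi(h_1\cdots h_n k)=\varphi(h_1)+\dots+\varphi(h_n)+\varphi(k)=0$ directly.

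There is, however, a genuine gap in (ii), in the very first step. You ``reduce to the abelian case'' by factoring $\varphi$ through the abstract abelianization $G/[G,G]$, and then apply Pontryagin duality and the primary decomposition of profinite abelian groups to that quotient. But those tools require a compact \emph{Hausdorff} abelian topological group, and $G/[G,G]$ with the quotient topology need not be Hausdorff: the algebraic commutator subgroup of a compact group need not be closed (this is exactly the subtlety behind the Nikolov--Segal results). Passing instead to $G/\overline{[G,G]}$ does not help, because a non-continuous $\varphi$ is not known to vanish on the closure of $[G,G]$ --- that is essentially an instance of the statement being proved. The fix is to avoid global abelianization: for the connected part, use that every compact \emph{connected} group (abelian or not) is divisible (Mycielski, cited in the paper), so $\varphi(G^{\circ})=0$ by (i); and for the profinite quotient $Q=G/G^{\circ}$, run your primary-decomposition-and-splitting argument not on $Q$ itself but on the closed procyclic subgroup $\overline{\langle g\rangle}$ for each $g\in Q$, which \emph{is} a compact Hausdorff abelian group (cf.\ Lemma \ref{abelian}) isomorphic to a product of pro-$p$ groups, so your $g=g_{p_0}+(g-g_{p_0})$ trick gives $\varphi(g)=0$ element by element. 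With that repair your proof goes through.
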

\begin{proof}
  Let $G$ be a $k$-divisible group and $\varphi\colon G\to \mathbb{Z}$ a homomorphism. Then $\varphi(G)$ is $k$-divisible too. If $\varphi$ was surjective, then $1\in \varphi(G)$, but $1$ is not $k$-divisible in $\Z$ for any $k>1$, so $\varphi$ cannot be surjective.
 
  For the second (third) part we note that due to Dudley \cite[Theorem 2]{Dudley}, any homomorphism $\psi$ of a compact  (connected locally compact) group $K$ to $\Z$ must be continuous. Thus the image $\psi(K)$ is a finite (connected) subgroup of $\Z$ and hence trivial, contradicting surjectivity.
\end{proof}
There is a different way to prove that compact groups do not have any epimorphisms to $\Z$ via the notion of abelian \textit{algebraically compact} groups. These are abelian groups that can be realized as images of abstract epimorphisms of compact groups. Many properties of these groups are discussed in \cite[Chapter 6]{Fuchs}. If an algebraically compact group is indecomposable (as a direct sum), then it either contains torsion, the $p$-adic integers $\Z_p$ for some prime $p$ or $\mathbb{Q}$ because of \cite[Chapter 6, Corollary 3.6 (ii)]{Fuchs}. The group $\Z$ is indecomposable but does not contain torsion, $\Z_p$ or $\mathbb{Q}$, so it is not algebraically compact, which means there are no abstract epimorphisms from any compact group to $\Z$.

To illustrate the nice interaction between the group and topological structure, we formulate the following lemma, which will be useful later on.
\begin{lemma}(\cite[Lemma 4.4]{Stroppel})
\label{abelian}
  Let $G$ denote a topological group and $A$ an abelian subgroup. Then its closure $\overline{A}$ is an abelian subgroup of $G$.
\end{lemma}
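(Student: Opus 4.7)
The plan is to argue in two stages: first verify that $\overline{A}$ is a subgroup of $G$, then verify that it is abelian. Both parts will rely on the general principle that a continuous map sends the closure of a set into the closure of its image.

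First I would show $\overline{A}$ is a subgroup. Since multiplication $m\colon G\times G\to G$ and inversion $i\colon G\to G$ are continuous and since the product topology on $G\times G$ satisfies $\overline{A\times A}=\overline{A}\times\overline{A}$, one gets
\[
m(\overline{A}\times\overline{A}) = m(\overline{A\times A})\subseteq \overline{m(A\times A)}\subseteq \overline{A},
\]
and similarly $i(\overline{A})\subseteq\overline{i(A)}=\overline{A}$. Together with $e\in A\subseteq\overline{A}$, this makes $\overline{A}$ a subgroup.

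For the abelian property I would consider the commutator map $c\colon G\times G\to G$, $c(x,y):=xyx^{-1}y^{-1}$, which is continuous as a composition of the continuous group operations. Because $A$ is abelian, $c(A\times A)=\{e\}$. By the same closure-image principle,
\[
c(\overline{A}\times\overline{A}) = c(\overline{A\times A})\subseteq \overline{c(A\times A)}=\overline{\{e\}}.
\]
Here I would invoke the standing Hausdorff convention: in a Hausdorff space singletons are closed, so $\overline{\{e\}}=\{e\}$. Therefore $xyx^{-1}y^{-1}=e$ for all $x,y\in\overline{A}$, which is precisely commutativity.

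There is no serious obstacle; the only subtle point is that the conclusion genuinely uses the Hausdorff assumption (without it $\overline{\{e\}}$ could be larger than $\{e\}$ and the commutator argument would only show that $\overline{A}$ is abelian modulo $\overline{\{e\}}$). Since the paper's standing convention is that topological groups are Hausdorff, this is automatic.
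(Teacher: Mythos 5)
Your argument is correct and complete: the closure--image principle $f(\overline{S})\subseteq\overline{f(S)}$ for continuous $f$, applied to multiplication, inversion, and the commutator map, together with $\overline{A\times A}=\overline{A}\times\overline{A}$ and the Hausdorff convention making $\overline{\{e\}}=\{e\}$, is exactly the standard proof. The paper gives no proof of its own here --- it simply cites \cite[Lemma 4.4]{Stroppel} --- so there is nothing to contrast with; your write-up, including the explicit remark on where the Hausdorff hypothesis enters, would serve as a valid proof of the cited statement.
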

There are two vastly different cases that are considered in the study of locally compact groups: totally disconnected groups and connected groups. The reason for this will become more apparent with the next definition and proposition.

\begin{definition}
Let $G$ be a topological group. The connected component of the identity element (i.e. the union of all connected subsets of $G$ containing $e_G$) is called the \textit{identity component} and will be denoted by $G^\circ$.
\end{definition}

\begin{proposition}(\cite[Lemma 4.9]{Stroppel})
  Let $G$ be a topological group. Then $G^\circ$ is a closed normal subgroup of $G$ and $G/G^\circ$ is a totally disconnected topological group.
\end{proposition}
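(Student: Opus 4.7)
The plan is to verify the four assertions --- $G^\circ$ is closed, is a subgroup, is normal, and the quotient is totally disconnected --- in that order. The unifying tool throughout is the fact that a continuous image of a connected space is connected, combined with the maximality that defines $G^\circ$.

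The first three points are essentially immediate consequences of this principle. Closedness follows because $\overline{G^\circ}$ is connected and contains $e_G$, so by maximality $\overline{G^\circ}=G^\circ$. For the subgroup property I would apply the continuous map $\mu\colon G\times G\to G$, $\mu(x,y)=xy^{-1}$, to the connected set $G^\circ\times G^\circ$; the image is connected, contains $e_G$, and therefore lies in $G^\circ$, so $G^\circ$ is closed under $(x,y)\mapsto xy^{-1}$ and hence is a subgroup. Normality is analogous: for any $g\in G$ the conjugation $\gamma_g$ is continuous (in fact a homeomorphism, cf.\ Examples \ref{Homeo}(iii)), so $gG^\circ g^{-1}=\gamma_g(G^\circ)$ is connected and contains $e_G$, forcing $gG^\circ g^{-1}\subseteq G^\circ$.

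The substantive part is total disconnectedness of $G/G^\circ$. Let $\pi\colon G\to G/G^\circ$ denote the quotient map and let $C$ be the connected component of $\pi(e_G)$. I would reduce the problem to showing that the preimage $\pi^{-1}(C)$ is connected in $G$: granting this, $\pi^{-1}(C)$ is a connected set containing $e_G$, hence contained in $G^\circ$ by maximality, which forces $C=\{\pi(e_G)\}$; translating by other cosets (a homeomorphism of $G/G^\circ$) then collapses every remaining component to a point, giving total disconnectedness. To establish connectedness of $\pi^{-1}(C)$ I would exploit two standard properties of this quotient map, namely that $\pi$ is open (since $\pi^{-1}(\pi(U))=U\cdot G^\circ$ is open for every open $U\subseteq G$) and that every fiber of $\pi$ is a coset $gG^\circ$, which is connected as a homeomorphic translate of $G^\circ$. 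Given any clopen partition $\pi^{-1}(C)=A\sqcup B$, connectedness of each fiber forces every fiber to lie entirely in $A$ or entirely in $B$, so $A$ and $B$ are both saturated. Openness of $\pi$ then makes $\pi(A)$ and $\pi(B)$ a disjoint open cover of $C$, contradicting connectedness of $C$ unless one of them is empty. I expect this fiber argument to be the main point requiring care; everything else is a cosmetic application of maximality of $G^\circ$.
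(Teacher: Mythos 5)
Your proof is correct; the paper itself gives no argument for this proposition but simply cites \cite[Lemma 4.9]{Stroppel}, and what you write is the standard proof found in such references (maximality of $G^\circ$ plus continuity for the first three claims, and the open-quotient-with-connected-fibers argument for total disconnectedness of $G/G^\circ$). The one step deserving care --- that $\pi$ restricted to the saturated set $\pi^{-1}(C)$ is still an open map onto $C$, so that $\pi(A)$ and $\pi(B)$ form a disjoint open cover of $C$ --- is handled correctly by your observation that $\pi(U\cap\pi^{-1}(C))=\pi(U)\cap C$.
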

Hence, for any topological group $G$ the sequence
$\{e_G\}\hookrightarrow G^\circ\overset{\iota}{\hookrightarrow} G \overset{\pi}{\twoheadrightarrow} G/G^\circ \twoheadrightarrow \{e_G\}$ is exact and the maps are continuous.
This gives us the possibility of understanding a general locally compact group by studying a connected and a totally disconnected group, which is sometimes easier than studying the whole group. As an application we obtain a different proof that there is no surjective homomorphism from a compact group onto $\Z$ (compare \cite{Alperin}):

If $\varphi\colon G\to \Z$ is a homomorphism from a compact group onto $\Z$, then it factors through $G/G^\circ$, because $G^\circ$ is compact and therefore divisible, since the structure theory of compact groups  implies the equivalence of connectedness and divisibility \cite[Corollary 2]{Mycielski}. So we have a homomorphism $\varphi'\colon G/G^\circ\to \Z$ of a totally disconnected compact group onto $\Z$, but due to their structure theory its image must be trivial, see \cite[Lemma 5.1]{Bass}.

The different behaviour of connected and totally disconnected groups on an algebraic level will become apparent at the end of the section where the most important theorems are stated. For us the case where the group topology is not only locally compact but in fact compact will be important later on. 

An importan class of locally compact groups is given by the \textit{almost connected} ones.
\begin{definition}
 A topological group $G$ is called \textit{almost connected} if the totally disconnected group $G/G^\circ$ is compact.
\end{definition}
This class of groups obviously contains all connected and all compact groups. Further, the group ${\rm GL}_n({\mathbb{R}})$ is an example of an almost connected locally compact group that is neither connected nor compact. 

To finish this section we now state two structure theorems of locally compact groups.

\begin{Itheorem}(\cite[Theorem 13]{Iwasawa})
\label{Iwasawa}
  Let $L$ be a connected locally compact group. Then we can write $L=H_1\cdot H_2\cdot\ldots\cdot H_n\cdot K$, where each $H_j$ is isomorphic to $\mathbb{R}$ and $K$ is a compact group.
\end{Itheorem}

\begin{vtheorem}(\cite[III \S 4, No. 6]{Bourbaki})
\label{vanDantzig}
  Let $L$ be a totally disconnected locally compact group. Then every neighbourhood of the identity contains an open subgroup. In particular, there exist compact open subgroups.
\end{vtheorem}
These theorems will prove to be incredibly useful as the structure of compact or divisible groups prohibits epimorphisms to $\Z$. This will allow us to deduce fixed point properties for every building block of a connected locally compact group and thus of the whole group. This allows us to define an action of a totally disconnected quotient and therefore the use of van Dantzig's Theorem.

\section{CAT(0) Geometry}
For the study of automatic continuity from the geometric perspective, we need to involve some geometry. This will be in the form of group actions on spaces, in which we can do geometry. For us, these will be the \CAT\ spaces. Our general background reference for \CAT\ spaces is \cite{Bridson-Haefliger}. 

We start by reviewing the definition of geodesic spaces. Let $(X, d)$ be a metric space and $x, y\in X$. A geodesic joining $x$ and $y$ is a map $\alpha_{xy}\colon[0, l]\to X$, such that
$\alpha_{xy}(0)=x, \alpha_{xy}(l)=y$ and $d(\alpha_{xy}(t), \alpha_{xy}(t')) =|t-t'|$ for all $t, t'\in [0, l]$. The image
of $\alpha_{xy}$ is called a {\em geodesic segment}. A metric space $(X, d)$ is
said to be a {\em geodesic space} if every two points in $X$ are joined by a geodesic. Note that geodesics  need not be unique in geodesic metric spaces, but they are unique in $(\mathbb{R}^2, d_2)$ where $d_2$ is the Euclidean metric. 
\begin{figure}[h!]
\begin{center}
\begin{tikzpicture}[scale=0.7, transform shape]
\node at (-2,2.6) {$(\mathbb{S}^2, d)$}; 
\node at (4,2.5) {$(\mathbb{R}^2, d_2)$}; 
\draw (-2,0) circle (2cm);
\draw (4,2) -- (4,-2);
\draw (2,0)-- (6,0);
\draw (-4,0) to[bend right=50] (0,0);
\draw[dotted] (-4,0) to[bend left=50] (0,0);
\fill (-2,-2) circle (2pt);
\fill (-2,2) circle (2pt);
\node at (-2,2.2) {$x$};
\node at (-2,-2.3) {$y$};
\draw[color=red] (-2,-2) to[bend right=40] (-2,2);
\draw[color=blue] (-2,-2) to[bend left=40] (-2,2);
\node[color=blue] at (-3.1,0) {$\alpha_{xy}$};
\node[color=red] at (-0.9,0) {$\beta_{xy}$};
\fill (3,-0.5) circle (2pt);
\fill (4.8,0.9) circle (2pt);
\draw[color=blue] (3,-0.5) to node[midway, above, sloped]{$[x,y]$} (4.8,0.9);
\node at (3,-0.9) {$x$};
\node at (4.8,1.2) {$y$};
\end{tikzpicture}
\end{center}
\end{figure}

We say that $X$ is {\em uniquely geodesic} if there is exactly one geodesic joining $x$ and $y$ for
all $x, y\in X$. In that case we denote the image of the geodesic between  $x$ and $y$ by $[x,y]$.

A {\em geodesic triangle} in a geodesic space $X$ consists of three points $p_1, p_2 , p_3$ in $X$ and a choice
of three geodesic segments $\alpha_{p_1p_2}([0,k]), \alpha_{p_2p_3}([0,l]), \alpha_{p_3p_1}([0,m])$. Such a geodesic triangle will
be denoted by $\Delta(p_1, p_2, p_3, \alpha_{p_1p_2}([0,k]), \alpha_{p_2p_3}([0,l]), \alpha_{p_3p_1}([0,m]))$. A triangle $\Delta(\overline{p_1}, \overline{p_2}, \overline{p_3}, [p_1, p_2], [p_2, p_3], [p_1, p_3])$ in Euclidian space $(\mathbb{R}^2, d_2)$ is called
a {\em comparison triangle} for $\Delta(p_1, p_2, p_3, \alpha_{p_1p_2}([0,k]), \alpha_{p_2p_3}([0,l]), \alpha_{p_3p_1}([0,m]))$ if
$d(p_i, p_j)= d_2(\overline{p_i}, \overline{p_j})$ for $i,j=´1, 2, 3$. A point $\overline{x}$ in $[\overline{p_i}, \overline{p_j}]$ is called a {\em comparison point}
for $x\in\alpha_{p_ip_j}([0,n])$ if $d(x, p_i) = d_2(\overline{x}, \overline{p_i})$, which implies $d(x, p_j)= d_2(\overline{x}, \overline{p_j})$. 

A geodesic triangle in $X$
is said to satisfy the {\em CAT(0) inequality} if for all $x$ and $y$ in the geodesic triangle
and all comparison points $\overline{x}$ and $\overline{y}$, the inequality $d(x,y)\leq d_2(\overline{x}, \overline{y})$ holds.

\begin{figure}[h]
\begin{center}
\begin{tikzpicture}[scale=0.7, transform shape]
\node at (-1,2.5) {$(X, d)$}; 
\node at (4,2.5) {$(\mathbb{R}^2, d_2)$}; 
\draw[fill=black]  (0,0) circle (1pt);
\node at (0,-0.3) {$p_1$}; 
\draw[fill=black]  (2,0) circle (1pt);
\node at (2,-0.3) {$p_2$}; 
\draw[fill=black]  (1,2) circle (1pt);
\node at (1,2.3) {$p_3$}; 
\draw (0,0) to [bend left] (2,0);
\draw (0,0) to [bend right] (1,2);
\draw (1,2) to [bend right] (2,0);
\draw[fill=black]  (1,0.3) circle (1pt);
\node at (1,0.0) {$y$}; 
\draw[fill=black]  (0.7,0.7) circle (1pt);
\node at (0.6,0.9) {$x$}; 
\draw[densely dotted] (1,0.3)--(0.7, 0.7);
\draw[fill=black]  (5,0) circle (1pt);
\node at (5,-0.3) {$\overline{p_1}$}; 
\draw[fill=black]  (7.6,0) circle (1pt);
\node at (7.6,-0.3) {$\overline{p_2}$}; 
\draw[fill=black]  (6.3,2.3) circle (1pt);
\node at (6.3,2.6) {$\overline{p_3}$};
\draw (5,0)--(7.6,0);
\draw (5,0)--(6.3, 2.3);
\draw (7.6,0)--(6.3, 2.3);
\draw[fill=black]  (6.3,0) circle (1pt);
\node at (6.3,-0.3) {$\overline{y}$}; 
\draw[fill=black]  (5.5,0.9) circle (1pt);
\node at (5.4,1.2) {$\overline{x}$}; 
\draw[densely dotted] (6.3,0)--(5.5, 0.9);
\node at (3.5,-1) {$d(x,y)\leq d_2(\overline{x},\overline{y})$}; 
\end{tikzpicture}
\end{center}
\end{figure}

\begin{definition}
A metric space $X$ is called a {\em CAT(0) space} if $X$ is a geodesic
space and all of its geodesic triangles satisfy the \CAT\ inequality.
\end{definition}

Roughly speaking, this definition means that triangles in $X$ are not thicker than in the Euclidean plane. One can easily verify from the definition that \CAT\ spaces
are uniquely geodesic. A subset $Y$ of a \CAT\ space $X$ is called
{\em convex} if for all $x$ and $y$ in $Y$ the geodesic segment $[x,y]$ is contained in $Y$. Indeed, a convex subspace of a \CAT\ space is \CAT .
The class of \CAT\ spaces is enormous. Here are some examples.
\begin{examples} The following are \CAT\ spaces:
\begin{enumerate}
\item[(i)] Euclidean and more generally Hilbert spaces.
\item[(ii)] Simplicial trees, metrizing each edge of a simplicial tree as an interval with length one, and more generally metric trees \cite[II Examples 1.15]{Bridson-Haefliger}.
\item[(iii)] Simply connected complete Riemannian manifolds of non-positive sectional curvature \cite[II Theorem 1A.6 and Theorem 4.1]{Bridson-Haefliger}.
\end{enumerate}
\end{examples}

\begin{definition}\label{finDIM}
For a \CAT\ space $X$ we define the \textit{flat rank} of $X$ as the supremum of the dimension of isometrically embedded Euclidean space $\mathbb{R}^n\hookrightarrow X$.
\end{definition}

For example bounded \CAT\ spaces have flat rank $0$, while unbounded simplicial trees have flat rank $1$.

For the next proposition we need to recall the definition of properness of metric spaces. By definition, a metric space $X$ is called {\em proper} if closed balls are compact. In particular, such a space is locally compact and complete. For example, $\mathbb{R}^n$ is a proper metric space, while the infinite dimensional Hilbert space is not proper.

\begin{proposition}(\cite[II Lemma 7.4]{Bridson-Haefliger})\label{FinDim}
Let $X$ be a proper \CAT\ space. If there exists a compact subset $K\subseteq X$ such that $X=\bigcup\limits_{f\in{\rm Isom}(X)} f(K)$, then $X$ has finite flat rank.
\end{proposition}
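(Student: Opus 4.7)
The plan is to argue by contradiction: assume the flat rank of $X$ is infinite. Then for every $n \in \mathbb{N}$ there is an isometric embedding $\iota_n \colon \mathbb{R}^n \hookrightarrow X$. Inside $\mathbb{R}^n$ I single out the origin $0$ together with the standard basis vectors $e_1,\ldots,e_n$; since $\iota_n$ preserves distances, the images $\iota_n(e_1),\ldots,\iota_n(e_n)$ all lie at distance $1$ from $\iota_n(0)$ and at pairwise distance $\sqrt{2}$.

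Next I exploit the cocompactness hypothesis to relocate each of these finite configurations inside a single bounded region of $X$. For each $n$ there exists $f_n \in {\rm Isom}(X)$ with $f_n^{-1}(\iota_n(0)) \in K$; after replacing $\iota_n$ by $f_n^{-1} \circ \iota_n$ (still an isometric embedding of $\mathbb{R}^n$) I may assume $p_n := \iota_n(0) \in K$. Set $q_n^i := \iota_n(e_i)$. Fixing $x_0 \in K$ and choosing $R$ with $K \subseteq \overline{B}(x_0,R)$ (possible because the compact set $K$ is bounded), the triangle inequality gives $d(x_0, q_n^i) \le R + 1$. Hence every $q_n^i$ lies in the closed ball $\overline{B}(x_0, R+1)$, which is compact by properness of $X$.

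To close the argument I use total boundedness of this compact ball. For $\varepsilon = \sqrt{2}/3$ there is a finite $\varepsilon$-net $N_\varepsilon \subseteq \overline{B}(x_0, R+1)$; choose $n > |N_\varepsilon|$. The set $\{q_n^1,\ldots,q_n^n\}$ consists of $n$ points in $\overline{B}(x_0, R+1)$ with mutual distances at least $\sqrt{2}$, so by pigeonhole two distinct points $q_n^i$ and $q_n^j$ share the same nearest net element. Then $d(q_n^i, q_n^j) \le 2\varepsilon = 2\sqrt{2}/3 < \sqrt{2}$, a contradiction.

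The argument has no serious obstacle; the heart of the matter is simply that cocompactness lets us push every Euclidean flat through a fixed bounded region, while properness then converts this into a packing problem that is incompatible with the orthonormal frames supplied by arbitrarily high-dimensional flats. Notably, the \CAT\ inequality is not used explicitly -- only Definition \ref{finDIM} of flat rank and the metric hypotheses enter the proof, suggesting that the statement in fact holds in any proper geodesic metric space whose isometry group acts cocompactly.
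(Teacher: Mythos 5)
Your argument is correct and is essentially the standard proof of this fact: translate each flat by the cocompact action so its origin lands in $K$, then observe that the $\sqrt{2}$-separated points $\iota_n(e_1),\ldots,\iota_n(e_n)$ all lie in the compact, hence totally bounded, ball $\overline{B}(x_0,R+1)$, which a pigeonhole count forbids for large $n$. The paper offers no proof of its own (it only cites Bridson--Haefliger, II Lemma 7.4, whose argument is this same packing idea), and your closing observation that the \CAT\ inequality is never used --- only properness and cocompactness of the isometry group --- is accurate.
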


\begin{convention} We assume from now on that all \CAT\ spaces are complete.
\end{convention}

Our focus lies on isometric group actions of a group $G$ on a \CAT\ space $X$. We describe this actions via the corresponding homomorphism $\varphi\colon G\to{\rm Isom}(X)$ into the isometry group of $X$.
\begin{definition}
Let $(X, d)$ be a \CAT\ space and $f$ be an isometry of $X$. The {\em displacement
function of $f$} is the function $d_f:X\rightarrow\mathbb{R}_{\geq 0}$ defined by $x\mapsto d(f(x), x)$. The {\em translation length of $f$} is defined as
$| f| :={\rm inf}\left\{d_f(x)\mid x\in X\right\}$.
The set of points in $X$ where $d_f$ attains this infimum will be denoted by ${\rm Min}(f)$ and is called the {\em minset of $f$}.
By definition, an isometry is  {\em semi-simple} if ${\rm Min}(f)$ is non-empty.

An isometry $f$ is called
\begin{enumerate}
\item[(i)] {\em elliptic} if $| f| = 0$ and ${\rm Min}(f)\neq \emptyset$.
\item[(ii)] {\em hyperbolic} if $| f| > 0$ and ${\rm Min}(f)\neq\emptyset$.
\item[(iii)] {\em parabolic} if ${\rm Min}(f)=\emptyset$.
\end{enumerate}
\end{definition}

Note that any element in ${\rm Isom}(X)$ falls into exactly one of these classes. An action $\varphi\colon G\to{\rm Isom}(X)$ is called {\em semi-simple} if all its elements in the image of $G$ under $\varphi$ are semi-simple. The minset of a semi-simple isometry carries much useful information about the isometry, see \cite[II Proposition 6.2]{Bridson-Haefliger}. 
\begin{lemma}
\label{invariant}
Let $X$ be a \CAT\ space and $f\in{\rm Isom}(X)$ be a semi-simple isometry.
\begin{enumerate}
\item[(i)] The minset of $f$ is a closed convex set.
\item[(ii)] Let $Y\subseteq X$ be a non-empty complete convex subset. If $Y$ is $f$-invariant, then $| f| =| f |_{Y}$ and ${\rm Min}(f)$ is non-empty iff $Y\cap{\rm Min}(f)$ is non-empty.
\end{enumerate}
\end{lemma}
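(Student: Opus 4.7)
The plan is to prove (i) by checking closedness and convexity of ${\rm Min}(f)$ separately, both flowing from analytic properties of the displacement function $d_f$, and then to prove (ii) by exploiting the 1-Lipschitz nearest-point projection $\pi_Y : X \to Y$ onto the complete convex subset $Y$, which is the fundamental tool supplied by \CAT\ geometry.

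For part (i), I would first note that $d_f$ is continuous: using the triangle inequality together with the fact that $f$ is an isometry one gets $|d_f(x)-d_f(y)| \leq 2\, d(x,y)$. Hence ${\rm Min}(f)=d_f^{-1}(\{|f|\})$ is closed. For convexity, the clean approach is to invoke the standard fact that in a \CAT\ space the distance between two linearly reparametrized geodesics is a convex function of the parameter (a direct consequence of the \CAT\ inequality applied to the quadrilateral they span). Applied to a geodesic $\alpha\colon[0,1]\to X$ joining $x,y\in{\rm Min}(f)$ and to the geodesic $f\circ\alpha$ joining $f(x),f(y)$, this shows that $t\mapsto d_f(\alpha(t))=d(\alpha(t),f(\alpha(t)))$ is convex. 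Its endpoint values equal $|f|$, which is a global lower bound, so the function is constantly $|f|$ on $[0,1]$, and the entire segment $[x,y]$ lies in ${\rm Min}(f)$.

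For part (ii), I would start from the nearest-point projection $\pi_Y\colon X\to Y$, which exists and is 1-Lipschitz precisely because $Y$ is a complete convex subset of a \CAT\ space; the critical input here is the uniqueness of the closest point in $Y$. The key observation is that $\pi_Y$ commutes with $f$: since $f$ is an isometry and $Y$ is $f$-invariant, $f(\pi_Y(x))$ lies in $Y$ and realizes the distance from $f(x)$ to $Y$, so uniqueness forces $\pi_Y(f(x))=f(\pi_Y(x))$. Combining this equality with the 1-Lipschitz property yields
\[
d_f(\pi_Y(x)) \;=\; d(\pi_Y(f(x)),\pi_Y(x)) \;\leq\; d(f(x),x) \;=\; d_f(x)
\]
for every $x\in X$. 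Taking the infimum gives $|f|_Y \leq |f|$; the reverse inequality is trivial since $Y\subseteq X$, so $|f|_Y=|f|$. For the iff, one direction is immediate. For the other, if $x\in{\rm Min}(f)$ then the inequality above combined with $|f|_Y=|f|$ forces $d_f(\pi_Y(x))=|f|$, so $\pi_Y(x)\in Y\cap{\rm Min}(f)$.

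The argument is essentially routine once the right tools are identified; the only point that requires genuine care, and thus the closest thing to an obstacle, is the commutation relation $\pi_Y\circ f=f\circ \pi_Y$ underlying part (ii). Everything else reduces to convexity of the distance function and to the fact that the nearest-point projection onto a complete convex subset of a \CAT\ space is well-defined and 1-Lipschitz.
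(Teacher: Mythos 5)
Your proof is correct and is essentially the paper's own justification: the paper does not prove this lemma but cites \cite[II Proposition 6.2]{Bridson-Haefliger}, and your argument --- continuity and convexity of the displacement function $d_f$ for (i), and the $f$-equivariant $1$-Lipschitz nearest-point projection $\pi_Y$ for (ii) --- is precisely the standard proof given there. The only point to make explicit is that the commutation $\pi_Y\circ f=f\circ\pi_Y$ uses $f(Y)=Y$ rather than just $f(Y)\subseteq Y$, which is what ``$f$-invariant'' means in this context.
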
  

The following proposition describes the structure of ${\rm Min}(f)$ for a hyperbolic isometry $f$.
\begin{proposition}(\cite[ II Theorem 6.8]{Bridson-Haefliger})
\label{minset}
Let $X$ be a \CAT\ space and $f\in{\rm Isom}(X)$ be a hyperbolic isometry. 
\begin{enumerate}
\item[(i)] The minset of $f$ is isometric to a product $Y\times\mathbb{R}$, where $Y=Y\times\left\{0\right\}$ is a closed convex subspace of $X$.
\item[(ii)] Every isometry $g\in{\rm Isom}(X)$ that commutes with $f$ leaves ${\rm Min}(f)\cong Y\times\mathbb{R}$ invariant, and $g$ acts on $Y\times\mathbb{R}$ via $(g_1,g_2)$, where $g_1$ is an isometry of $Y$ and $g_2$ is a translation on $\mathbb{R}$.
\end{enumerate}
\end{proposition}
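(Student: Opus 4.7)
The plan is to prove both parts by first constructing many $f$-axes inside ${\rm Min}(f)$ and then invoking the flat strip theorem to package them together into a product. For (i), I would begin by noting that for any $x\in{\rm Min}(f)$, the concatenation
\[
\ell_x := \bigcup_{n\in\mathbb{Z}} \bigl[f^n(x),\, f^{n+1}(x)\bigr]
\]
is a geodesic line on which $f$ acts as a translation of length $|f|$. The local-to-global argument uses the CAT(0) inequality: the three consecutive segments $[f^{n-1}(x),f^n(x)]$ and $[f^n(x),f^{n+1}(x)]$ have the same length $|f|$, and the midpoint $m$ of $[f^{n-1}(x),f^{n+1}(x)]$ satisfies $d(f^n(x),m)\leq 0$ by a standard midpoint convexity estimate applied to the displacement function, forcing the concatenation to be a geodesic. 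Thus every point of ${\rm Min}(f)$ sits on a full $f$-axis.

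Next I would show that any two such axes $\ell_x,\ell_y$ are parallel in the CAT(0) sense, i.e.\ the function $t\mapsto d(\ell_x(t),\ell_y(t))$ is constant. This is the core geometric step and where I expect the main work to be: one uses convexity of the distance function to get $t\mapsto d(\ell_x(t),\ell_y(t))$ is convex and bounded (because both $x$ and $y$ realise the infimum $|f|$ under iteration of $f$), hence constant. The flat strip theorem in CAT(0) geometry (a standard consequence of the CAT(0) $4$-point condition applied to ruled quadrilaterals) then gives that the convex hull of $\ell_x\cup\ell_y$ is isometric to a Euclidean strip $\mathbb{R}\times[0,d(x,y)]$. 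Fixing a basepoint $x_0\in{\rm Min}(f)$ and setting
\[
Y := \bigl\{\,y\in{\rm Min}(f)\mid d(y,x_0)=d(y,\ell_{x_0}(0))\,\bigr\},
\]
i.e.\ the set of footpoints of axes orthogonal to $\ell_{x_0}$ at $\ell_{x_0}(0)$, one checks via the strip structure that $Y$ is closed and convex and that the map $Y\times\mathbb{R}\to{\rm Min}(f)$, $(y,t)\mapsto \ell_y(t)$, is a well-defined surjective isometry.

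For (ii), since $g$ commutes with $f$, the identity ${\rm Min}(f)={\rm Min}(gfg^{-1})=g\cdot{\rm Min}(f)$ gives invariance. Moreover $g$ sends every $f$-axis $\ell_x$ to an $f$-axis $\ell_{g(x)}$, because $f$ translates $g(\ell_x)$ by $|f|$. In the product coordinates from (i), $g$ therefore preserves the family of $\mathbb{R}$-fibres, and by the uniqueness of the de Rham-type splitting for CAT(0) product decompositions (or directly: because the $\mathbb{R}$-factor is characterised as the set of directions of constant displacement $|f|$ under $f$), $g$ respects the product structure, so it splits as $(g_1,g_2)$. Finally $g_2\colon\mathbb{R}\to\mathbb{R}$ is an isometry commuting with the translation $f|_{\mathbb{R}}$ of non-zero length $|f|$; the only such isometries of $\mathbb{R}$ are translations, so $g_2$ is a translation.

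The main obstacle I expect is the parallelism/flat strip step: making sure the convexity-plus-boundedness argument for $d(\ell_x(t),\ell_y(t))$ is airtight requires knowing that the distance between iterated images $f^n(x)$ and $f^n(y)$ stays bounded, which in turn uses $x,y\in{\rm Min}(f)$ carefully. Everything else (invariance of ${\rm Min}(f)$ under commuting $g$, classification of isometries of $\mathbb{R}$) is essentially bookkeeping once the product structure is in hand.
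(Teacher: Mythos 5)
The paper does not prove this proposition at all: it is quoted verbatim from Bridson--Haefliger (II, Theorem 6.8) and used as a black box. Your sketch reconstructs the standard proof from that reference (axes through every point of ${\rm Min}(f)$, parallelism via convexity plus boundedness of $t\mapsto d(\ell_x(t),\ell_y(t))$, the flat strip theorem / decomposition of a union of parallel lines into $Y\times\mathbb{R}$, and for (ii) the conjugation identity ${\rm Min}(f)={\rm Min}(gfg^{-1})$ together with the fact that an isometry of $\mathbb{R}$ commuting with a nontrivial translation is a translation). That overall route is correct and is exactly the intended one.

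Two details are off as written, though neither is fatal to the architecture. First, the justification that $\ell_x$ is a geodesic is garbled: there is no ``midpoint convexity estimate'' giving $d(f^n(x),m)\le 0$ for $m$ the midpoint of $[f^{n-1}(x),f^{n+1}(x)]$ --- the comparison triangle only gives $d(f^n(x),m)\le\sqrt{|f|^2-c^2/4}$ with $c=d(f^{n-1}(x),f^{n+1}(x))$, which is what you are trying to show is zero, so the argument as stated is circular. The correct step takes $m$ to be the midpoint of $[x,f(x)]$, uses convexity of the metric to get $d(m,f(m))\le\tfrac12\bigl(d(x,f(x))+d(f(x),f^2(x))\bigr)=|f|$, hence $d(m,f(m))=|f|$ since $|f|$ is the infimum of displacement; then $d(m,f(x))+d(f(x),f(m))=|f|=d(m,f(m))$ forces $[m,f(x)]\cup[f(x),f(m)]$ to be a geodesic, and the concatenation is a local geodesic, hence a geodesic in a \CAT\ space. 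Second, your set $Y=\{y\in{\rm Min}(f)\mid d(y,x_0)=d(y,\ell_{x_0}(0))\}$ is vacuous as written since $x_0=\ell_{x_0}(0)$; you want the fibre of the nearest-point projection onto $\ell_{x_0}$ over $x_0$, equivalently the set $\{\ell_y(0)\mid y\in{\rm Min}(f)\}$ with the parametrisations synchronised by parallelism (this is Proposition II.2.14 in Bridson--Haefliger, which also delivers closedness and convexity of $Y$ and the canonicity of the splitting that you invoke in (ii)). With those two repairs the proof is the standard one.
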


Let $\varphi\colon G\to{\rm Isom}(X)$ be an action on a \CAT\ space $X$.
For us, the \textit{fixed point sets} of subsets of $G$ will play a vital role. These are given by ${\rm Fix}(\varphi(H)):=\left\{x\in X\big| \varphi(h)(x)=x\ \text{for all }h\in H\right\}$, where $H$ is any subset of the group $G$. A quick observation shows that taking fixed point sets is inclusion reversing, which means $H_1\subseteq H_2 \subseteq G$ implies ${\rm Fix}\left(\varphi(G)\right)\subseteq{\rm Fix}\left(\varphi(H_2)\right)\subseteq{\rm Fix}\left(\varphi(H_1)\right)$. Moreover we have:

\begin{lemma}
\label{Fix}
Let $\varphi\colon G\to{\rm Isom}(X)$ be an action of a group G on a \CAT\ space $X$. Let $H\subseteq G$ be a subset, then:
\begin{enumerate}
\item[(i)]  $\bigcap\limits_{h\in H}{\rm Fix}(\varphi(h))={\rm Fix}\left( \varphi(H)\right)={\rm Fix}\left(\langle \varphi(h)\mid h\in H\rangle\right)$.
\item[(ii)] The fixed point set ${\rm Fix}\left(\varphi(H)\right)$ is a closed convex subset of $X$.
\item[(iii)] For any $\alpha\in {\rm Isom}(X)$ we have $\alpha\left({\rm Fix}(\varphi(H))\right)={\rm Fix}\left(\alpha\varphi(H)\alpha^{-1}\right)$.
\item[(iv)] If ${\rm Fix}(\varphi(K_0))$ is a maximal element in the poset $\left\{{\rm Fix}(\varphi(K))\mid K\subseteq G\text{ subgroup}\right\}$, then $\varphi(g){\rm Fix}(K_0)$ is also maximal for all $g\in G$.
\end{enumerate}
\end{lemma}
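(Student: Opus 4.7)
The plan is to prove the four parts sequentially, with part (i) laying the groundwork for the others.

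For (i), I would first observe that being fixed by $\varphi(H)$ means being fixed by every $\varphi(h)$ with $h \in H$, which is literally the definition of the intersection, giving the first equality. For the second equality, I would note that $H \subseteq \langle H\rangle$ gives ${\rm Fix}(\langle\varphi(h)\mid h\in H\rangle)\subseteq{\rm Fix}(\varphi(H))$ by the inclusion-reversing property. The reverse inclusion follows from the fact that the stabilizer of any point $x \in X$ in ${\rm Isom}(X)$ is a subgroup; if every $\varphi(h)$ for $h \in H$ fixes $x$, then this subgroup contains all the generators, hence all of $\langle \varphi(h)\mid h\in H\rangle$.

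For (ii), closedness follows because for each $g \in G$, the map $x \mapsto d(\varphi(g)(x),x)$ is continuous (as a composition of isometries and the distance function), and ${\rm Fix}(\varphi(g))$ is the preimage of $\{0\}$; arbitrary intersections of closed sets are closed, and by (i) ${\rm Fix}(\varphi(H))$ is such an intersection. For convexity, I would reduce to a single isometry $f = \varphi(g)$: if $x,y \in {\rm Fix}(f)$, then $f$ sends the unique geodesic $[x,y]$ to a geodesic joining $f(x)=x$ and $f(y)=y$; by unique geodesicity of \CAT\ spaces this must be $[x,y]$ itself, and since $f$ is an isometry preserving endpoints of a geodesic segment it fixes it pointwise. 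An intersection of convex sets is convex, so (ii) follows.

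For (iii), the key identity is the direct computation $(\alpha\varphi(h)\alpha^{-1})(\alpha(x)) = \alpha(\varphi(h)(x))$. If $x \in {\rm Fix}(\varphi(H))$, this equals $\alpha(x)$ for all $h \in H$, so $\alpha(x) \in {\rm Fix}(\alpha\varphi(H)\alpha^{-1})$; the reverse inclusion uses the same calculation with $\alpha^{-1}$ in place of $\alpha$.

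For (iv), I would apply (iii) with $\alpha = \varphi(g)$ to rewrite $\varphi(g){\rm Fix}(\varphi(K_0)) = {\rm Fix}(\varphi(gK_0g^{-1}))$, which lies in the poset because $gK_0g^{-1}$ is again a subgroup of $G$. Suppose for contradiction that some ${\rm Fix}(\varphi(K))$ strictly contains ${\rm Fix}(\varphi(gK_0g^{-1}))$. Applying $\varphi(g)^{-1}$, which is a bijection preserving strict inclusion, and using (iii) again yields ${\rm Fix}(\varphi(g^{-1}Kg)) \supsetneq {\rm Fix}(\varphi(K_0))$, contradicting the maximality of ${\rm Fix}(\varphi(K_0))$. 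None of these steps presents a real obstacle; the only subtlety worth flagging is making sure the conjugation computation in (iii) is applied symmetrically (both to pass to $\varphi(g){\rm Fix}(\varphi(K_0))$ and to pull back a hypothetical larger fixed set), which is precisely what turns maximality into a conjugation-invariant property.
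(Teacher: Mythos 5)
Your proof is correct and follows essentially the same route as the paper: (i) and (iii) are the same direct verifications, and your argument for (iv) (conjugating by $\varphi(g)^{-1}$ and invoking maximality of ${\rm Fix}(\varphi(K_0))$ against the subgroup $g^{-1}Kg$) is exactly the paper's. The only cosmetic difference is in (ii), where the paper simply cites its Lemma~\ref{invariant} on minsets of semi-simple isometries, whereas you reprove closedness and convexity from the displacement function and unique geodesics; both are fine.
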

\begin{proof}
The result in (i) follows straightforward from the definition of the fixed point set. For (ii) we know by Lemma \ref{invariant} that the fixed point set of an elliptic isometry is a closed convex subset. Applying (i) we get  the desired result. The third assertion follows by a straightforward calculation.
\begin{enumerate}
\item[to $(iv)$]
Suppose $\varphi(g){\rm Fix}(\varphi(K_0))\subseteq {\rm Fix}(K_1)$ for some subgroup $K_1\subseteq G$. Applying $\varphi(g)^{-1}$ shows ${\rm Fix}(\varphi(K_0))\subseteq \varphi(g)^{-1}({\rm Fix}(K_1))={\rm Fix}(\varphi(g^{-1}K_1g))$. Now the fact that ${\rm Fix}(\varphi (K_0))$ is maximal together with the fact that $g^{-1}K_1g$ is a subgroup implies that we have "$=$" and not just "$\subseteq$" completing the proof.
\end{enumerate} 
\end{proof}
Note that $(iv)$ remains true if we consider the compact open subgroups of $G$ rather than all subgroups.

For us, the existence of global fixed points, that means a non-empty ${\rm Fix}(\varphi(G))$ will play a big role. The next proposition gives us two conditions that imply this property.

\begin{proposition}(\cite[II Corollary 2.8]{Bridson-Haefliger}, \cite[Lemma 2.1]{Marquis})
\label{boundedOrbit}
Let $\varphi\colon G\to{\rm Isom}(X)$ be a group action on a \CAT\ space $X$.  
\begin{enumerate}
\item[(i)] If for $x\in X$ the subset $\left\{ \varphi(g)(x) \mid g\in G \right\} \subseteq X$ is bounded, then ${\rm Fix}(\varphi(G))$ is non-empty.
\item[(ii)] If $G=H_1\cdot\ldots\cdot H_r$ is a product of subgroups $H_1, \ldots, H_r$ and ${\rm Fix}(\varphi(H_i))\neq\emptyset$ for $i=1,\ldots, r$, then ${\rm Fix}(\varphi(G))$ is non-empty.
\end{enumerate}
\end{proposition}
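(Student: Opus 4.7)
For part (i), my strategy is to exhibit a canonical fixed point associated to the bounded orbit, namely its \emph{circumcenter}. Set $B := \{\varphi(g)(x) \mid g \in G\}$; this is bounded by hypothesis and $\varphi(G)$-invariant, since $\varphi(h) \cdot \varphi(g)(x) = \varphi(hg)(x) \in B$ for every $h \in G$. Define $r \colon X \to \mathbb{R}_{\geq 0}$ by $r(y) := \sup_{b \in B} d(y,b)$; the goal is to show that $r$ attains its infimum $\rho$ at a unique point. The key input is the so-called CN-inequality, an immediate consequence of the \CAT\ inequality applied to a triangle with vertices $y_1, y_2, b$ and the midpoint $m$ of $[y_1,y_2]$:
\[
d(m,b)^2 \;\leq\; \tfrac{1}{2}d(y_1,b)^2 + \tfrac{1}{2}d(y_2,b)^2 - \tfrac{1}{4}d(y_1,y_2)^2.
\]
Taking suprema over $b \in B$ produces the estimate $r(m)^2 \leq \tfrac12 r(y_1)^2 + \tfrac12 r(y_2)^2 - \tfrac14 d(y_1,y_2)^2$. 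Any minimizing sequence for $r$ is therefore Cauchy, so completeness yields a limit $c \in X$ with $r(c) = \rho$; applying the same estimate to two minimizers forces them to coincide, so $c$ is unique. Every $\varphi(h)$ is an isometry with $\varphi(h)(B) = B$, hence preserves $r$, hence fixes its unique minimizer $c$. Thus $c \in {\rm Fix}(\varphi(G))$.

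For part (ii), the plan is to reduce to part (i) by locating a point with a bounded $G$-orbit. Choose $p_i \in {\rm Fix}(\varphi(H_i))$ for $i=1,\dots,r$. For an arbitrary element $g = h_1 h_2 \cdots h_r \in G$ with $h_i \in H_i$, the identity $\varphi(h_r)(p_r) = p_r$ gives $\varphi(g)(p_r) = \varphi(h_1 \cdots h_{r-1})(p_r)$. Applying the triangle inequality, isometric invariance of $d$, and the successive identities $\varphi(h_j)(p_j) = p_j$, I telescope as follows: for any $1 \leq k \leq r-1$,
\[
d\bigl(\varphi(h_1 \cdots h_k)(p_{k+1}),\, p_1\bigr) \;\leq\; d(p_{k+1},p_k) + d\bigl(\varphi(h_1 \cdots h_{k-1})(p_k),\, p_1\bigr).
\]
Iterating produces the uniform bound
\[
d\bigl(\varphi(g)(p_r),\, p_1\bigr) \;\leq\; \sum_{i=1}^{r-1} d(p_{i+1}, p_i),
\]
independent of the chosen factorisation of $g$. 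Hence the orbit of $p_r$ under $\varphi(G)$ is bounded, and part (i) provides a global fixed point.

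The only genuine obstacle is part (i): one must verify that the CN-inequality is strong enough to give simultaneously the \emph{existence} (via a Cauchy property of minimizing sequences, using completeness of $X$) and the \emph{uniqueness} of the circumcenter. Once the circumcenter is in hand, its invariance under $\varphi(G)$ is automatic, and part (ii) is then a purely formal telescoping argument on top of part (i).
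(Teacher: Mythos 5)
Your proof is correct and is essentially the argument the paper relies on: the paper gives no proof of its own but cites Bridson--Haefliger (II.2.8, the Bruhat--Tits circumcenter argument you reproduce, valid here since the paper's convention makes $X$ complete) for (i) and Marquis (Lemma 2.1, your telescoping bound reducing (ii) to a bounded orbit) for (ii).
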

Now we have gathered enough information about the geometry of \CAT\ spaces and about locally compact groups to start doing geometric group theory and work towards the proof of our Main Theorem.

\section{Compact and Divisible groups}
For the use of Iwasawa's Structure Theorem \ref{Iwasawa} we need more information about actions of compact and abelian divisible groups. We give a geometric proof that these groups have to act via elliptic isometries on \CAT\ spaces of finite flat rank if the action is semi-simple and the infimum of the non-zero translation lengths is positive. Thus, the aim of this section is to prove the following theorem.

\begin{theorem}
\label{abelianNoZ}
Let $\varphi \colon A \to{\rm Isom}(X)$ be a semi-simple action of an abelian group $A$ on a \CAT\ space $X$ with finite flat rank such that $\inf\left\{|\varphi(a)|>0\mid a\in A\right\}>0$. If $A$ has no quotient isomorphic to $\Z$, then every element of $\varphi(A)$ is an elliptic isometry.
\end{theorem}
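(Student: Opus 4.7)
Suppose for contradiction that some $\varphi(a_0)$ is hyperbolic (by semi-simplicity the only alternative is elliptic). The plan is to construct iteratively elements $a_0, a_1, a_2, \ldots \in A$ together with nested $A$-invariant closed convex subspaces $X_0 \supseteq X_1 \supseteq \cdots$ of $X$, each of the form $X_k \cong Y_k \times \mathbb{R}^{k+1}$ as a metric product, on which $A$ acts diagonally via a homomorphism $g_{k+1}\colon A \to {\rm Isom}(Y_k)$ and a translation homomorphism $T_{k+1} = (\tau_{k+1}, \ldots, \tau_1)\colon A \to \mathbb{R}^{k+1}$. The finite flat rank of $X$ will eventually force the iteration to stop, and stopping will yield a surjection $A \twoheadrightarrow \Z$, contradicting the hypothesis on $A$.

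I initialize by setting $X_0 := {\rm Min}(\varphi(a_0)) = Y_0 \times \mathbb{R}$ (Proposition~\ref{minset}); since $A$ is abelian, each $\varphi(a)$ commutes with $\varphi(a_0)$, so Proposition~\ref{minset}(ii) shows that $\varphi(a)$ preserves $X_0$ and acts there as $(g_1(a), \tau_1(a))$ with $\tau_1(a_0) = |\varphi(a_0)| > 0$. At stage $k$, I then split on whether $T_{k+1}(A) \subseteq \mathbb{R}^{k+1}$ is discrete. If it is, then since $\tau_1(a_0) > 0$ the subgroup $T_{k+1}(A)$ is a nontrivial discrete subgroup of $\mathbb{R}^{k+1}$, hence isomorphic to $\Z^r$ for some $r \geq 1$, and coordinate projection yields a surjection $A \twoheadrightarrow \Z$, contradicting the assumption on $A$. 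Otherwise $T_{k+1}(A)$ accumulates at the origin, so I can pick $a_{k+1} \in A$ with $0 < |T_{k+1}(a_{k+1})| < \epsilon$, where $\epsilon := \inf\{|\varphi(a)|>0\mid a\in A\} > 0$.

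The key computation is the following: $|\varphi(a_{k+1})| \geq |T_{k+1}(a_{k+1})| > 0$ forces $|\varphi(a_{k+1})| \geq \epsilon$ by the infimum hypothesis; Lemma~\ref{invariant}(ii) identifies this with the translation length of the restriction to the invariant subspace $X_k$, and the product structure of $X_k$ yields the Pythagorean identity $|\varphi(a_{k+1})|^2 = |g_{k+1}(a_{k+1})|^2 + |T_{k+1}(a_{k+1})|^2$. Combining these gives $|g_{k+1}(a_{k+1})|^2 \geq \epsilon^2 - |T_{k+1}(a_{k+1})|^2 > 0$, so $g_{k+1}(a_{k+1})$ is a semi-simple hyperbolic isometry of $Y_k$. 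Applying Proposition~\ref{minset} to $g_{k+1}(a_{k+1})$ inside $Y_k$ gives ${\rm Min}(g_{k+1}(a_{k+1})) = Y_{k+1} \times \mathbb{R}$, and $X_{k+1} := Y_{k+1} \times \mathbb{R}^{k+2}$ is $A$-invariant by a further application of Proposition~\ref{minset}(ii), advancing the iteration.

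The hard part is ensuring termination. Each $X_k$ isometrically contains $\mathbb{R}^{k+1}$ (a fiber over any point of $Y_k$), so the finite flat rank $n$ of $X$ bounds $k+1 \leq n$. A further iteration at stage $k = n-1$ would embed $\mathbb{R}^{n+1}$ into $X$, contradicting the flat rank hypothesis. Hence the non-discrete branch cannot occur at stage $n-1$, so the discrete branch must apply at some $k \leq n-1$, yielding the $\Z$-quotient that contradicts the assumption on $A$ and completes the proof.
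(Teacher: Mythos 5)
Your proof is correct, and it rests on the same geometric core as the paper's: iterated splitting of minsets via Proposition~\ref{minset}, invariance of translation lengths under passage to invariant complete convex subspaces (Lemma~\ref{invariant}), the Pythagorean formula for translation lengths on a metric product, and the finite flat rank to cap the number of Euclidean factors. Where you genuinely diverge is in the logic of the iteration and in the algebraic endgame. The paper runs the splitting to exhaustion --- until every element acts elliptically on the residual factor $\tilde{Y}$ --- and only then analyses the translation homomorphism $\psi\colon A\to\mathbb{R}^n$, splitting into cases according to whether $\psi(A)$ is closed and invoking the classification of closed subgroups of $\mathbb{R}^n$ as $\mathbb{R}^l\times\Z^m$. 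You instead test discreteness of $T_{k+1}(A)$ at every stage: the discrete case immediately yields a nontrivial free abelian image and hence a surjection onto $\Z$, while the non-discrete case hands you an element whose translation part is smaller than $\varepsilon$ but whose total translation length, being positive, is at least $\varepsilon$, so that its $Y_k$-component is forced to be hyperbolic and the splitting continues. This buys two things: you need only the elementary fact that a nontrivial discrete subgroup of $\mathbb{R}^m$ is free abelian of positive rank (not the full closed-subgroup classification), and your termination argument is sharper, since you pinpoint exactly why the discrete branch must fire by stage $n-1$, whereas the paper's ``iterate while hyperbolic elements remain'' is a touch more informal about why a terminal stage exists. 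The trade-off is that the paper's version separates the geometric reduction cleanly from the algebraic analysis, at the cost of the extra closed/non-closed case distinction; both arguments ultimately use the two hypotheses in the same way, with the positive infimum of translation lengths excluding accumulation at the origin and the absence of $\Z$-quotients killing the discrete case.
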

To illustrate the arguments, we first prove the special case that the \CAT\ space is a simplicial tree in Proposition \ref{Ktrees}. The argument we showcase here is a combination of the arguments given by Morris and Nickolas in \cite{MorrisNickolas} and Alperin in \cite{Alperin}, who uses some results of Bass \cite{Bass}. Before proving this result, we need a small group theoretic result about the structure of the infinite dihedral group which immediately follows from the Kurosh subgroup Theorem \cite{Kurosh}.

\begin{lemma}\label{dihedral}
Let $D_\infty$ denote the infinite dihedral group and let $H\subseteq D_\infty$ be a non-trivial abelian subgroup. Then $H$ is isomorphic to either $\mathbb{Z}/2\mathbb{Z}$ or $\Z$.
\end{lemma}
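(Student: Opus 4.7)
The plan is to realize $D_\infty$ as the free product $\mathbb{Z}/2\mathbb{Z} * \mathbb{Z}/2\mathbb{Z}$ and exploit the Kurosh Subgroup Theorem, which describes every subgroup of a free product as itself a free product of a free group together with conjugates of subgroups of the factors. Being abelian is a very strong restriction on a free product, so this should quickly pin down the possible shapes of $H$.

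First, I would recall the standard presentation $D_\infty = \langle s, t \mid s^2, t^2\rangle$, which exhibits $D_\infty \cong \mathbb{Z}/2\mathbb{Z} * \mathbb{Z}/2\mathbb{Z}$. Applying the Kurosh Subgroup Theorem to $H \subseteq D_\infty$ then produces a decomposition of the form $H \cong F * \bigl(*_{i \in I}\, g_i A_i g_i^{-1}\bigr)$, where $F$ is a free group and each $A_i$ is a subgroup of one of the two $\mathbb{Z}/2\mathbb{Z}$ factors, hence either trivial or equal to $\mathbb{Z}/2\mathbb{Z}$.

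Second, I would invoke the elementary fact that a free product $G_1 * G_2$ of two non-trivial groups is never abelian: if $a \in G_1 \setminus \{e\}$ and $b \in G_2 \setminus \{e\}$, then $ab$ and $ba$ are distinct reduced words in $G_1 * G_2$. Since $H$ is abelian, its Kurosh decomposition must therefore involve at most one non-trivial factor. Two cases remain. Either $H = F$ is abelian and free, hence trivial or infinite cyclic; or $H = g A g^{-1}$ is a single conjugate of a subgroup of one of the $\mathbb{Z}/2\mathbb{Z}$ factors, hence trivial or isomorphic to $\mathbb{Z}/2\mathbb{Z}$. Since $H$ is non-trivial by hypothesis, this yields the two listed isomorphism classes.

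The main obstacle is nothing more than correctly invoking Kurosh; the remainder is purely formal. As a sanity check, the same conclusion can be obtained by a direct calculation in the semidirect product realization $D_\infty \cong \mathbb{Z} \rtimes \mathbb{Z}/2\mathbb{Z}$: the reflections pairwise fail to commute (each one commutes only with itself and the identity), and any abelian subgroup that avoids the reflections is contained in the rotation subgroup $\mathbb{Z}$.
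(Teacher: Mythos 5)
Your argument is correct and is precisely the route the paper takes: the authors state that the lemma ``immediately follows from the Kurosh subgroup Theorem,'' and your write-up simply supplies the details (at most one non-trivial free factor in the Kurosh decomposition of an abelian subgroup, whence $H$ is a subgroup of a conjugate of a $\mathbb{Z}/2\mathbb{Z}$ factor or an abelian free group). Nothing further is needed.
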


Now we show the statement of the theorem above in the special case that the \CAT\ space $X$ is a tree. 
\begin{proposition}
\label{Ktrees}
Let $\varphi \colon A \to{\rm Isom}(T)$ be a simplicial action of an abelian group $A$ on a  tree $T$. If $A$ has no quotient isomorphic to $\Z$, then every element of $\varphi(A)$ is an elliptic isometry.  
\end{proposition}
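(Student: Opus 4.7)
The plan is to argue by contradiction. Suppose there exists $a_0 \in A$ such that $\varphi(a_0)$ is not elliptic. On a simplicial tree every simplicial isometry is semi-simple (fixes a vertex, inverts an edge, or translates along an axis), so $\varphi(a_0)$ must be hyperbolic. Let $L := {\rm Min}(\varphi(a_0))$. By Proposition \ref{minset}(i), $L$ splits isometrically as $Y \times \mathbb{R}$ for some closed convex $Y \subseteq T$; but a tree contains no isometrically embedded Euclidean square, so $Y$ must be a single point and $L$ is a bi-infinite line made up of edges of $T$, namely the axis of $\varphi(a_0)$.

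Next I would exploit commutativity. For every $b \in A$ the isometry $\varphi(b)$ commutes with $\varphi(a_0)$, hence by Proposition \ref{minset}(ii) it leaves $L$ invariant and acts on $L \cong \mathbb{R}$ either as a translation or as a reflection. Because $\varphi$ is a simplicial action, each $\varphi(b)|_L$ is a simplicial isometry of the line $L$. This yields a well-defined restriction homomorphism
\[
\rho \colon A \longrightarrow \mathrm{Isom}_{\mathrm{simp}}(L) \cong D_\infty.
\]

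The image $\rho(A)$ is an abelian subgroup of $D_\infty$. Moreover $\rho(a_0) = \varphi(a_0)|_L$ is a nontrivial translation (by $|\varphi(a_0)| > 0$) and therefore has infinite order, so $\rho(A)$ is infinite. By Lemma \ref{dihedral} we conclude $\rho(A) \cong \mathbb{Z}$. Composing $\rho$ with this isomorphism produces a surjective homomorphism $A \twoheadrightarrow \mathbb{Z}$, contradicting the hypothesis that $A$ admits no quotient isomorphic to $\mathbb{Z}$. Hence every element of $\varphi(A)$ is elliptic.

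The main point I expect to need care with is identifying $L$ as an honest simplicial line and checking that the restriction map lands in simplicial isometries of $L$, so that Lemma \ref{dihedral} genuinely applies. The simpliciality of the action together with Proposition \ref{minset}(ii) handles both points, and inversions of edges (if any occur in $\varphi(A)$) are harmless since they are already elliptic in the \CAT\ sense and thus compatible with the conclusion.
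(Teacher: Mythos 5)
Your argument is correct and follows essentially the same route as the paper's proof: assume a non-elliptic (hence hyperbolic) element, identify its minset as a bi-infinite line via the $Y\times\mathbb{R}$ splitting and the flat rank of a tree, restrict the whole abelian action to that line to land in $D_\infty$, and invoke Lemma \ref{dihedral} to force a $\mathbb{Z}$ quotient. Your added care about simpliciality of the restriction and about edge inversions being elliptic is a harmless refinement of the same argument.
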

\begin{proof} Without loss of generality, we can assume $T$ is infinite, as else Proposition \ref{boundedOrbit} guarantees the existence of a global fixed point.
	
Suppose for a contradiction, that there exists an element  $a\in A$ such that $\varphi(a)$ is not elliptic. Since $\varphi$ acts on $T$ simplicially, that means that $\varphi(a)$ needs to be a hyperbolic isometry due to \cite[Theorem A]{Bridson}. 
 
For any hyperbolic element, the minset ${\rm Min}(\varphi(a))$ is isometric to a product $Y\times \mathbb{R}$, where $Y$ is a convex subset of $T$, see Proposition \ref{minset}. Since $T$ is an infinite tree, we can deduce immediately that $Y\cong\{y\}$, because $Y$ is convex and an infinite tree has flat rank $1$. Hence ${\rm Min}(\varphi(a))$ is itself a tree (in fact, a doubly infinite line), which we denote by $T(\varphi(a))$.
 
By Proposition \ref{minset} the whole group $A$ acts on $T(\varphi(a))$. Therefore we obtain a homomorphism 
\begin{align*}\psi\colon A&\to{\rm Isom}(T(\varphi(a))),\\
b&\mapsto [x \mapsto \varphi(b)(x)].
\end{align*} 
The isometry group of $T(\varphi(a))$ is itself isomorphic to $D_\infty$. So the image $\psi(A)$ needs to be an abelian  subgroup of $D_\infty$. Lemma \ref{dihedral} tells us that $\psi(A)$ needs to be isomorphic to $\{1\}$, $\Z/2\Z$ or $\Z$. So which one is it?
 
  Since $\varphi(a)$ is a translation on $T(\varphi(a))$, $\psi(A)$ cannot be isomorphic to $\{1\}$ or $\mathbb{Z}_2$. So we deduce $\psi(A)\cong \mathbb{Z}$, contradicting the assumption that $A$ has no quotient isomorphic to $\Z$. This completes the proof.
\end{proof}

Now we prove the general case. Here the minset of a hyperbolic isometry still decomposes as $Y\times \mathbb{R}$ but $Y$ is just a convex subset. This gives us some problems as we need to consider isometries of $Y$ too rather than just translations along $\mathbb{R}$. Some of our arguments are similar to \cite[Theorem 2.5]{Caprace Marquis}, who gave a different proof of a similar result. 

\begin{proof}[Proof of Theorem 4.1] 
Suppose there exists an element $a\in A$ such that $\varphi(a)$ is not elliptic. Then $\varphi(a)$ needs to be a hyperbolic isometry since the action is semi-simple. By Proposition \ref{minset} the minset of $\varphi(a)$ has the structure of a direct product, ${\rm Min}(\varphi(a))\cong Y\times \mathbb{R}$, where $Y$ is a convex subset of $X$ and every element commuting with $\varphi(a)$ leaves ${\rm Min}(\varphi(a))$ invariant. Moreover each element $s\in A$ acts as $\left(s_1,s_2\right)$, where $s_1$ is an isometry of $Y$ and $s_2$ a translation of $\mathbb{R}$. Additionally, the isometry $s_1$ is also semi-simple.
 
We now consider the action of $A$ on $Y$ induced by the above decomposition. Suppose there still exists some $t\in A$ such that $t_1$ is hyperbolic. Then ${\rm Min}(t_1)\cong Y'\times \mathbb {R}$ again and $A$ acts on $Y'\times \mathbb{R}$ since $A$ is abelian. Combining this with the original action we obtain an action of $A$ on the subspace $Y'\times \mathbb{R}\times \mathbb{R}$ via $t\mapsto (t_{1_1},t_{1_2},t_2)$, where the original action of $t$ on $Y\times \mathbb{R}$ was given by $(t_1,t_2)$.
 
Iterating this process as long as there are still elements acting hyperbolically on the first factor gives us an action of $A$ on the subspace $\tilde{Y}\times \mathbb{R}^n$ for some $n\geq1$. For $s\in A$ we write $s= (s_0,s_1,\ldots,s_n)$, where each $s_j$ is a translation of $\mathbb{R}$ for $j\in\{1,...,n\}$ and $s_0$ is some semi-simple isometry of $\tilde{Y}$. This process of splitting up the minset needs to stop eventually, since $X$ has finite flat rank (see Definition \ref{finDIM}). So we can assume that the induced action of $A$ on the subspace $\tilde{Y}$ has no hyperbolic elements, which means every $s_0$ acts elliptically, thus has a fixed point.
 
  The translation length of $|s|$ can be computed using the splitting up into the parts $s_0, s_1,\ldots,s_n$. Since $s_0$ has a fixed point we have $|s_0|=0$. The metric on $\mathbb{R}^n$ is given by the construction in \cite[Proposition II.2.14]{Bridson-Haefliger}. Therefore we obtain
  $$|s|=\sqrt{|s_1|^2+|s_2|^2+...+|s_n|^2}\quad (\star)$$
 
  This gives us an action $\phi$ of $A$ on $\mathbb{R}^n$ via translations and $\inf\left\{|\phi(s)|> 0\mid s\in A\right\}>0$ since this is the case for the original action. We define a homomorphism $\psi\colon A\to (\mathbb{R}^n,+)$ via $\psi (s):= \phi(s)(0)$. Therefore $\psi \left(A\right)=:H$ is a subgroup of $\mathbb{R}^n$. We now differentiate between two cases.
 
 \textit{Case 1: The subgroup $H$ is not closed in $(\mathbb{R}^n, \mathcal{T}_{standard})$}\\
  Since $\mathbb{R}^n$ is locally compact, every discrete subgroup is closed (see for example \cite[Cor. 4.6]{Stroppel}), therefore $H$ is not discrete. That means there needs to be some accumulation point $h\in \mathbb{R}^n$. Since $H$ is a topological group as a subgroup of $\mathbb{R}^n$, we can assume this accumulation point is $0$, because if the group is discrete in $0$, then it is discrete as a group (because leftmultiplication with any element is a homeomorphism). So we have a sequence of elements $\left(h_n\right)_{n\in \mathbb{N}}$, $h_i\neq 0$ for all $i\in \mathbb{N}$ and $h_n$ converges to $0$. Due to the construction of $\psi$ and $(\star)$, we can see that for all $s\in A$ the equation $|\varphi(s)|=|\psi(s)|$ holds. So the existence of the sequence $(h_n)$ contradicts the statement $\inf\left\{|\varphi(s)|>0\mid s\in A\right\}>0$, finishing case 1.
  
  \textit{Case 2: The subgroup $H$ is closed in $(\mathbb{R}^n, \mathcal{T}_{standard})$}\\
  If $H$ is closed, then $H$ is isomorphic to $\mathbb{R}^l\times \Z^m$ for some $l,m\in \mathbb{N}$, see\cite[Ch. 7, Section 1.2]{Bourbaki}. As in the case of an action on a tree, we can deduce that $m=0$ since $A$ has no epimorphisms to $\Z$. If $|\psi(s)|>0$, then we can once again find a series $\left(h_n\right)_{n\in\mathbb{N}}$ converging to $0$ while each $h_i\neq 0$ for $i\in \mathbb{N}$. This once again contradicts $\inf\left\{|\varphi(s)|> 0\mid s\in A\right\}>0$. So $\psi\left(A\right)=\{0\}$. But then we immediately obtain $|\varphi(a)|=0$ contradicting the hyperbolicity of $\varphi(a)$ finishing case 2.
 
Thus we have arrived at a contradiction in each case, meaning our hypothesis that a hyperbolic isometry  exists in $\varphi(A)$ was false, therefore finishing the proof.
\end{proof}
This has a nice corollary for compact groups:
\begin{corollary}\label{cpt}
  Let $\varphi\colon K\to {\rm Isom}(X)$ be a semi-simple action of a compact group $K$ on a \CAT\ space $X$ of finite flat rank. If $\inf\{|\varphi(k)|>0\mid k\in K\}>0$, then every element in $\varphi(K)$ is an elliptic isometry.
\end{corollary}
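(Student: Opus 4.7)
My plan is to reduce Corollary \ref{cpt} directly to Theorem \ref{abelianNoZ} by restricting the action to a suitably chosen abelian subgroup associated to each element $k \in K$. The natural candidate is the closure $A := \overline{\langle k \rangle}$ of the cyclic subgroup generated by $k$, taken inside the topological group $K$. First I would check the two structural hypotheses needed to invoke Theorem \ref{abelianNoZ}: by Lemma \ref{abelian} the closure of an abelian subgroup is abelian, so $A$ is abelian; and since $A$ is a closed subgroup of the compact group $K$ it is itself compact, so by Proposition \ref{EpiZ}(ii) it admits no abstract epimorphism onto $\mathbb{Z}$.

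Next I would verify the geometric hypotheses. The restriction $\varphi|_A \colon A \to \operatorname{Isom}(X)$ is again semi-simple, simply because every element of $A$ is an element of $K$ and the original action is assumed semi-simple. Similarly, $\inf\{|\varphi(a)| > 0 \mid a \in A\} \geq \inf\{|\varphi(k)| > 0 \mid k \in K\} > 0$, so the positivity assumption passes to the restriction. At this point all four hypotheses of Theorem \ref{abelianNoZ} are satisfied for $\varphi|_A$, and the theorem forces every element of $\varphi(A)$ to be elliptic. In particular $\varphi(k)$ is elliptic, and since $k \in K$ was arbitrary the corollary follows.

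I do not expect a real obstacle here: the entire content of the corollary is the reduction to the abelian case, and the only subtlety is noticing that one should pass from $\langle k \rangle$ to its topological closure in order to gain compactness (and hence the non-existence of quotients onto $\mathbb{Z}$), while using Lemma \ref{abelian} to preserve commutativity. The proof is therefore essentially a one-line application of Theorem \ref{abelianNoZ} combined with Proposition \ref{EpiZ}(ii) and Lemma \ref{abelian}.
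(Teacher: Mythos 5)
Your proposal is correct and follows essentially the same route as the paper: pass to $\overline{\langle k\rangle}$, use Lemma \ref{abelian} for commutativity, Proposition \ref{EpiZ} for the absence of epimorphisms onto $\Z$, and then apply Theorem \ref{abelianNoZ}. The only cosmetic difference is that the paper phrases it as a proof by contradiction starting from a hypothetical hyperbolic element, whereas you argue directly for each $k$.
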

\begin{proof}
  Assume there exists a hyperbolic element $\varphi(k)\in \varphi(K)$. By Lemma \ref{abelian} the subgroup $\overline{\langle k\rangle}$ is abelian. Hence $\varphi$ induces an action of a compact abelian subgroup $\overline{\langle k\rangle}$ on $X$ with all the assumptions of the previous theorem. Proposition \ref{EpiZ} shows that compact groups admit no epimorphisms to $\Z$, thus every element of $\overline{\langle k\rangle}$ needs to act elliptically, contradicting the assumption that $\varphi(k)$ is hyperbolic.
\end{proof}

\begin{corollary}
\label{R}
  Let $\varphi\colon\mathbb{R}\to \mathrm{Isom}(X)$ be a semi-simple action on a \CAT\ space $X$ of finite flat rank. If $\inf\{|\varphi(r)|>0\mid r\in \mathbb{R}\}>0$, then every element in $\varphi(\mathbb{R})$ is an elliptic isometry.
\end{corollary}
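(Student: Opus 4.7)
The plan is to apply Theorem \ref{abelianNoZ} directly with $A = \mathbb{R}$. The hypotheses on the action in Corollary \ref{R} — semi-simplicity of $\varphi$ and $\inf\{|\varphi(r)| > 0 \mid r \in \mathbb{R}\} > 0$ — match verbatim the hypotheses required by Theorem \ref{abelianNoZ}. So the only thing left to check is the algebraic condition that $\mathbb{R}$ admits no epimorphism onto $\mathbb{Z}$.

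This condition is immediate from Proposition \ref{EpiZ}. In fact, it follows in two independent ways: first, $\mathbb{R}$ is $k$-divisible for every natural number $k > 1$, since any real number $r$ can be written as $k \cdot (r/k)$, so Proposition \ref{EpiZ}(i) applies; alternatively, $\mathbb{R}$ is a connected locally compact group, so Proposition \ref{EpiZ}(iii) gives the same conclusion. Either argument yields that $\mathbb{R}$ has no quotient isomorphic to $\mathbb{Z}$.

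With $\mathbb{R}$ being abelian and satisfying this quotient condition, Theorem \ref{abelianNoZ} implies that every element of $\varphi(\mathbb{R})$ is an elliptic isometry, which is exactly the conclusion. There is no real obstacle here: the statement is the analogue for $\mathbb{R}$ of Corollary \ref{cpt} for compact groups, and the argument is in fact slightly cleaner than in the compact case, since $\mathbb{R}$ is itself abelian, so we do not need to pass through a closure $\overline{\langle r \rangle}$ as was done in the proof of Corollary \ref{cpt}.
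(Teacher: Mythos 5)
Your proposal is correct and matches the paper's proof: the paper also applies Theorem \ref{abelianNoZ} to $A=\mathbb{R}$, using the divisibility of $\mathbb{R}$ together with Proposition \ref{EpiZ} to rule out epimorphisms onto $\Z$. Your additional remark that connectedness of $\mathbb{R}$ gives an alternative route via Proposition \ref{EpiZ}(iii) is a harmless extra observation, not a different argument.
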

\begin{proof}
Since $\mathbb{R}$ is divisible, Proposition \ref{EpiZ} tells us that it admits no epimorphisms to $\Z$, so the conditions of the previous theorem are satisfied. The conclusion of the theorem then finishes the proof.
\end{proof}

The following example shows the necessity of the condition on the infimum of translation lengths. The group $(\mathbb{R},+)$ acts on $\mathbb{R}$ via $\varphi\colon \mathbb{R}\to {\rm Isom}(\mathbb{R})$, $\varphi(t)(r):=t+r$. Clearly, no element besides $id_{\mathbb{R}}$ is elliptic, but also $\inf\{|\varphi(t)|>0\mid t\in \mathbb{R}\}=0$.

Now that we have concluded the desired local fixed point properties of all components of connected locally compact groups, we can prove the Main Theorem.

\section{The proof of the Main Theorem}
Let us start by proving the first statement of the Main Theorem. Recall that a family $\mathcal{F}$ of sets has the \textit{finite intersection property}, if for every finite subfamily $\emptyset\neq \mathcal{F}_0\subseteq \mathcal{F}$ we have $\bigcap \mathcal{F}_0\neq \emptyset$. 

\begin{proof}[Proof of the Main Theorem (1)]
By Iwasawa's Structure Theorem \ref{Iwasawa}, the identity component $L^{\circ}\subseteq L$ can be decomposed as a product 
\[
L^{\circ}=H_1\cdot\ldots\cdot H_n\cdot K
\]
where each $H_j$ is isomorphic to $\mathbb{R}$ and $K$ is a compact group. We first show that the action 
\[
\Phi_{|H_j}\colon H_j\to{\rm Isom}(X)
\]
has a global fixed point for $j=1,\ldots, n$. By assumptions (i) and (ii) the action $\Phi_{|H_j}$ is semi-simple and ${\rm inf}\left\{|\Phi(h)|>0\mid h\in H_j\right\}>0$, it thus follows by Corollary \ref{R} that the group $H_j$ acts via elliptic isometries on $X$. 

Let us now consider the subfamily $\mathcal{H}_j:=\left\{{\rm Fix}(\Phi(h))\mid h\in H_j\right\}$ of $\left\{{\rm Fix}(\Phi(l))\mid l\in L\right\}$. This family has the finite intersection property because of the following reason: for a finite subset $\mathcal{F}:=\left\{{\rm Fix}(\Phi(h_1)), \ldots, {\rm Fix}(\Phi(h_n))\right\}$ of  $\mathcal{H}_j$ the intersection of $\mathcal{F}$ is equal to ${\rm Fix}(\langle \Phi(h_1),\ldots, \Phi(h_n)\rangle)$ due to Lemma \ref{Fix}. By assumption (iii) this fixed point set is non-empty. Hence, the intersection of $\mathcal{H}_j$ is non-empty by assumption (iv). We have
\[
\emptyset\neq\bigcap\mathcal{H}_j={\rm Fix}(\Phi(H_j))\text{ for }j=1,\ldots, n.
\] 
We can proceed analogously to show that $\Phi_{|K}\colon K\to{\rm Isom}(X)$ has a global fixed point using Corollary \ref{cpt} this time. 

Hence, the action 
\[
\Phi_{|L^{\circ}}:L^{\circ}=H_1\cdot\ldots\cdot H_n\cdot K\rightarrow{\rm Isom}(X)
\]
has a global fixed point in $X$ by Proposition \ref{boundedOrbit}.

We have ${\rm Fix}(\Phi(L^{\circ}))\neq\emptyset$ and by Lemma \ref{Fix} the fixed point set ${\rm Fix}(\Phi(L^{\circ}))$ is a closed convex subspace of $X$, thus a \CAT\ space of finite flat rank. The quotient $L/L^{\circ}$ acts on ${\rm Fix}(\Phi(L^{\circ}))$ via $\varphi(gL^{\circ})(x):=\Phi(g)(x)$. Thus, the induced action 
\[
\varphi:L/L^{\circ}\rightarrow{\rm Isom}({\rm Fix}(\Phi(L^{\circ})))
\]
has properties (i)-(iv) due to Lemma \ref{invariant}. Since $L$ is almost connected, the quotient $L/L^{\circ}$ is compact. By the same argument as in the first part of the proof, the compact group $L/L^{\circ}$ has a global fixed point in ${\rm Fix}(\Phi(L^{\circ}))$. Hence, the action $\Phi$ has a global fixed point in $X$.
\end{proof}

We now complete the proof of the Main Theorem.

\begin{proof}[Proof of the Main Theorem (2)]

Since $L$ is a totally disconnected locally compact group, by van Dantzig's Theorem \ref{vanDantzig} there exists a compact open subgroup $K\subseteq L$. 

If there exists a compact open subgroup $K$ such that $\Phi(K)=\{{\rm id}_X\}$, then the kernel of $\Phi$ is open in $L$. Hence the map $\Phi$ is continuous. 

If there is no compact open subgroup $K\subseteq L$ such that $\Phi(K)$ is trivial, then by assumption the family 
\[
\left\{{\rm Fix}(\Phi(K))\neq\emptyset\mid K\subseteq L\text{ compact open subgroup}\right\}
\]
is non-empty and has a maximal element ${\rm Fix}(\Phi(K_0))\neq X$. We claim that ${\rm Fix}(\Phi(K_0))$ is invariant under $\Phi$. For $g\in L$ the group $gK_0 g^{-1}\cap K_0$ is a compact open subgroup of $L$ by Example \ref{Homeo}(iii).  We have the following relations between the fixed point sets of the compact open subgroups $K_0$, $gK_0 g^{-1}$ and $gK_0 g^{-1}\cap K_0$:
\[
{\rm Fix}(\Phi(K_0))\subseteq{\rm Fix}(\Phi(gK_0 g^{-1}\cap K_0))
\text{ and }
{\rm Fix}(\Phi(gK_0g^{-1}))\subseteq{\rm Fix}(\Phi(gK_0 g^{-1}\cap K_0)).
\]
Since ${\rm Fix}(\Phi(K_0))$ is maximal, the fixed point set $\Phi(g)({\rm Fix}(\Phi(K_0)))={\rm Fix}(\Phi(gK_0g^{-1}))$  is also maximal by Lemma \ref{Fix} and we conclude that
\[
{\rm Fix}(\Phi(K_0))={\rm Fix}(\Phi(gK_0 g^{-1}\cap K_0))
\text{ and }
{\rm Fix}(\Phi(gK_0g^{-1}))={\rm Fix}(\Phi(gK_0 g^{-1}\cap K_0)).
\]
Hence
\[
\Phi(g){\rm Fix}(\Phi(K_0))={\rm Fix}(\Phi(gK_0g^{-1}))={\rm Fix}(\Phi(K_0)).
\]
\end{proof}

\begin{proof}[Proof of the Main Theorem (3)]
By the Main Theorem (1) the action $\Phi_{|L^{\circ}}\colon L^{\circ}\to{\rm Isom}(X)$ has a global fixed point.

Suppose first that ${\rm Fix}(\Phi(L^{\circ}))=X$. Then the action $\Phi$ factors through $L/L^{\circ}$: 
\begin{center}
  \begin{tikzpicture}
  \coordinate[label=above:$L$] (A) at (0,0);
  \coordinate[label=above:${\rm Isom}(X)$] (B) at (4.5,0);
  \coordinate[label=below:$L/L^{\circ}$] (C) at (0,-2);
 
  \draw[->] (0.5,0.4) to node[midway, above]{$\Phi$} (3.5,0.4) ;
  \draw[->>] (A) to (C);
  
\node at (-0.25,-1) {$\pi$};
  \draw[->](0.5,-2) to node[midway, below]{$\varphi$}(3.85,0.1);
  \end{tikzpicture}
  \end{center}
Since the quotient $L/L^{\circ}$ is a totally disconnected locally compact group, by the Main Theorem (2) it follows that $\varphi$ is continuous or $\varphi$ preserves a non-empty proper fixed point set ${\rm Fix}(\varphi(K'))$ where $K'$ is a compact open subgroup of $L/L^{\circ}$. Hence $\Phi$ is continuous or preserves the non-empty proper fixed point set ${\rm Fix}\left(\Phi\left(\pi^{-1}(K')\right)\right)$ of the closed subgroup $\pi^{-1}(K')$.

Suppose now that ${\rm Fix}(\Phi(L^{\circ}))\neq X$. Since  $L^{\circ}$ is a closed normal subgroup of $L$, the non-empty proper fixed point set ${\rm Fix}(\Phi(L^{\circ}))$ is invariant under $\Phi$.
\end{proof}

\begin{remark}
	It is conjectured that property (iii) of the Main Theorem is always satisfied for simplicial actions on finite dimensional \CAT\ simplicial complexes. In the case of some $2$-dimensional \CAT\ triangle complexes this is proven in \cite[Theorem 1.1]{OsajdaQ} and for affine buildings of types $\tilde{A_2}$ and $\tilde{C_2}$ in \cite[Theorem A]{SchillewartStruyveThomas}.
\end{remark}
Before we discuss an application to \CAT\ groups, let's see one example. Let $T_3$ be the $3$-regular tree. At every vertex we attach two additional edges, see picture below.  The group $\mathbb{Z}/2\mathbb{Z}$ acts on this new tree $T'_3$ by flipping the newly attached edges. So we have an action of $\Z/2\Z\times{\rm Isom}(T_3)$  on $T'_3$ where ${\rm Isom}(T_3)$ preserves the colors of the vertices. Combining this with the discontinuous homomorphism $\Phi\colon\prod\limits_{\mathbb{N}}\Z/2\Z \twoheadrightarrow\Z/2\Z$ from Example \ref{NotContinuous}, we get an action of $\prod\limits_{\mathbb{N}}\Z/2\Z\times {\rm Isom}(T_3)$ on $T'_3$: 
\begin{figure}[h]
\begin{center}
\begin{tikzpicture}
\draw[fill=black]  (0,0) circle (1pt);
\draw[fill=black]  (1,0) circle (1pt);
\draw (0,0)--(1,0);
\draw[fill=black]  (-0.7,0.7) circle (1pt);
\draw (0,0)--(-0.7,0.7);
\draw[fill=black]  (-0.7,-0.7) circle (1pt);
\draw (0,0)--(-0.7,-0.7);
\draw[fill=black]  (1.7,0.7) circle (1pt);
\draw (1,0)--(1.7,0.7);
\draw[fill=black]  (1.7,-0.7) circle (1pt);
\draw (1,0)--(1.7,-0.7);
\draw[dashed] (1.7,0.7)--(1.7,1.6);
\draw[dashed] (1.7,0.7)--(2.6,0.7);
\draw[dashed] (1.7,-0.7)--(1.7,-1.6);
\draw[dashed] (1.7,-0.7)--(2.6,-0.7);
\draw[dashed] (-0.7,0.7)--(-0.7,1.6);
\draw[dashed] (-0.7,0.7)--(-1.6,0.7);
\draw[dashed] (-0.7,-0.7)--(-0.7,-1.6);
\draw[dashed] (-0.7,-0.7)--(-1.6,-0.7);
\draw (0,0)--(0,0.7);
\draw (0,0)--(0,-0.7);
\draw[fill=red, draw=red]  (0,0.7) circle (1.3pt);
\draw[fill=blue, draw=blue]  (0,-0.7) circle (1.3pt);
\draw (1,0)--(1,0.7);
\draw (1,0)--(1,-0.7);
\draw[fill=blue, draw=blue]  (1,0.7) circle (1.3pt);
\draw[fill=red, draw=red]  (1,-0.7) circle (1.3pt);
\draw (-1.2,0.1)--(-0.7, 0.7);
\draw (-0.2,1.3)--(-0.7, 0.7);
\draw[fill=red, draw=red]  (-0.2,1.3) circle (1.3pt);
\draw[fill=blue, draw=blue]  (-1.2,0.1) circle (1.3pt);
\draw (2.2,-0.1)--(1.7, -0.7);
\draw (1.2,-1.3)--(1.7, -0.7);
\draw[fill=red, draw=red]  (2.2,-0.1) circle (1.3pt);
\draw[fill=blue, draw=blue]  (1.2,-1.3) circle (1.3pt);
\draw (-0.2,-1.3)--(-0.7, -0.7);
\draw (-1.2,-0.1)--(-0.7, -0.7);
\draw[fill=red, draw=red]  (-0.2,-1.3) circle (1.3pt);
\draw[fill=blue, draw=blue]  (-1.2,-0.1) circle (1.3pt);
\draw (2.2,0.1)--(1.7, 0.7);
\draw (1.2,1.3)--(1.7, 0.7);
\draw[fill=red, draw=red]  (2.2,0.1) circle (1.3pt);
\draw[fill=blue, draw=blue]  (1.2,1.3) circle (1.3pt);
\node at (-8,0) {$\prod\limits_{\mathbb{N}}\Z/2\Z\times{\rm Isom}(T_3)\overset{\Phi\times{\rm id}}\twoheadrightarrow\Z/2\mathbb{Z}\times{\rm Isom}(T_3)$};
\node at (-3.5,0) {\Large{$\curvearrowright$}};
\node at (-2.5,0) {$T'_3:=$};
\end{tikzpicture}
\end{center}
\end{figure}

We give $\prod\limits_{\mathbb{N}}\Z/2\Z$ the product topology with the discrete topology on each factor and ${\rm Isom}(T_3)$ the discrete topology. This makes the group $\prod\limits_{\mathbb{N}}\Z/2\Z\times{\rm Isom}(T_3)$ totally disconnected and locally compact. Condition (v) of the Main Theorem is satisfied, as $\prod_{\mathbb{N}}\Z/2\Z\times\left\{id_{T_3}\right\}$ is a compact open subgroup and it fixes the original tree $T_3$ making it maximal as either all additionally attached edges are flipped or none are. So since the action is not continuous, there needs to be a proper invariant fixed point set. This is given by the original tree $T_3$.

\section{Application}

An important application of the Main Theorem is to the class of \textit{CAT(0) groups} with an additional property controlling the size of torsion subgroups. Similar to \cite{Bridson-Haefliger} we define the following properties of an action on a \CAT\ space.
\begin{definition} Let $\varphi\colon G\to{\rm Isom}(X)$ be a group action on a  \CAT\ space $X$. We say
\begin{enumerate}
\item[(i)] The action is \textit{proper} if for every $x\in X$, there exists an $r\in \mathbb{R}_{>0}$ such that the set $\left\{g\in G\mid \varphi(g)(B_r(x)) \cap B_r(x)\neq \emptyset \right\}$ is finite.
\item[(ii)] The action is \textit{cocompact} if there exists a compact subspace $K\subseteq X$ such that $\varphi(G)(K)=X$.
\end{enumerate}
We call $G$ a \textit{CAT$(0)$ group} if there exists a proper CAT$(0)$ space $Y$ and a proper, cocompact action $\psi\colon G\to {\rm Isom}(Y)$. 
\end{definition} 
In particular, such an action is via semi-simple isometries and the infimum of translation lengths of hyperbolic isometries is positive, see \cite[Prop. II.6.10]{Bridson-Haefliger}.

Some authors require the group action to be free (or faithful) for a \CAT\ group and some others do not assume that the \CAT\ space is proper. For our purposes this definition is suitable as the structure of non-proper \CAT\ spaces is too wild and as we do want to discuss groups that contain torsion. Note that if the action is not faithful, the kernel of the action needs to be finite due to properness.

We now revisit Corollary E from the introduction.
\begin{corollaryE}\label{CAT(0) gr} Any abstract group homomorphism $\varphi\colon L\to G$ from a locally compact group $L$ into a \CAT\ group $G$ whose torsion groups are finite is continuous unless the image $\varphi(L)$ is contained in the normalizer of a finite non-trivial subgroup of $G$.

In particular, any abstract group homomorphism from a locally compact group into
\begin{enumerate}
\item[(i)] a right-angled Artin group,
\item [(ii)] a limit group
\end{enumerate}
is continuous.
\end{corollaryE}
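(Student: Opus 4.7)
The plan is to apply the Main Theorem (3) to the composition $\Phi := \psi\circ\varphi\colon L \to \mathrm{Isom}(Y)$, where $\psi\colon G\to\mathrm{Isom}(Y)$ is a proper cocompact action on a proper \CAT\ space $Y$ witnessing $G$ as a \CAT\ group. The space $Y$ has finite flat rank by Proposition \ref{FinDim}, and the remarks following the definition of a \CAT\ group supply conditions (i) and (ii) directly. Note also that $\ker\psi$ is finite, since it sits inside any point stabilizer of $\psi$.

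The argument hinges on one clean observation made possible by the torsion-finite hypothesis on $G$: if a subgroup $S\leq\psi(G)$ consists entirely of elliptic isometries, then every element of $\psi^{-1}(S)\leq G$ fixes a point and hence has finite order (by properness of $\psi$), so $\psi^{-1}(S)$ is a torsion subgroup of $G$, hence finite, forcing $S$ to be finite. I would apply this principle three times. First, for condition (iii): a finitely generated $H\leq L$ acting elliptically has $\Phi(H)$ finite, and a finite isometry group of a \CAT\ space admits a circumcenter fixed point. Second, for condition (iv): if $\{\mathrm{Fix}(\Phi(l_i))\}$ has the finite intersection property, then Lemma \ref{Fix} together with (iii) shows every finitely generated subgroup of $\langle\Phi(l_i)\rangle$ is finite, so $\langle\Phi(l_i)\rangle$ is itself a torsion (hence finite) subgroup of $\psi(G)$, and its non-empty fixed point set equals the intersection of the family. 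Third, for the maximality condition on $\{\mathrm{Fix}(\Phi(H)) : H\leq L\text{ closed}\}$: each non-empty fixed point set agrees with the fixed point set of its pointwise $\psi(G)$-stabilizer, which is finite by the observation, so an ascending chain of such fixed point sets corresponds to a strictly descending chain of finite subgroups of $\psi(G)$, which must terminate; Zorn's lemma then supplies maximal elements in every subfamily.

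Before invoking the Main Theorem (3), I observe that if $\ker\psi$ is non-trivial then it is already a finite non-trivial normal subgroup of $G$, and $\varphi(L)\subseteq G = N_G(\ker\psi)$ puts us in the normalizer case. I may therefore assume $\psi$ is injective, so that continuity of $\Phi$ is equivalent to continuity of $\varphi$. Applying the Main Theorem (3) yields two alternatives. Either $\Phi$, and hence $\varphi$, is continuous; or $\Phi$ preserves a non-empty proper fixed point set $\mathrm{Fix}(\Phi(H'))$, in which case I take $F$ to be the pointwise $G$-stabilizer of this set: $F$ is finite (contained in a point stabilizer of $\psi$), non-trivial (since $\mathrm{Fix}(\Phi(H'))\neq Y$ forces $\Phi(H')$, and therefore $\varphi(H')\subseteq F$, to be non-trivial), and normalized by $\varphi(L)$ (since $\varphi(L)$ preserves $\mathrm{Fix}(\Phi(H'))$ setwise).

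Parts (i) and (ii) are immediate: right-angled Artin groups and limit groups are torsion-free \CAT\ groups, so the exceptional normalizer case is vacuous. I expect the principal technical obstacle to be the verification of the maximality condition on fixed point sets of arbitrary closed subgroups of $L$; this reduces cleanly, via the pointwise-stabilizer bookkeeping, to the termination of descending chains of finite subgroups of $\psi(G)$, and it is here that the torsion-finite hypothesis on $G$ is used in full.
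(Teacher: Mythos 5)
Your proof is correct and follows the paper's overall strategy (verify hypotheses (i)--(iv) and the maximality condition, apply Main Theorem (3), and convert the invariant fixed point set into a normalizer statement), but two of your technical sub-arguments genuinely differ from the paper's. For the maximality of fixed point sets, the paper (Lemma \ref{max el}) invokes the fact that a \CAT\ group has only finitely many conjugacy classes of finite subgroups and derives a contradiction from an infinite ascending chain; you instead replace each non-empty fixed point set by the fixed point set of its pointwise $\psi(G)$-stabilizer, which is finite by properness, so a strictly ascending chain of fixed point sets gives a strictly descending chain of subgroups of a single finite group -- this is more elementary and avoids citing \cite[III.$\Gamma$.Theorem 1.1]{Bridson-Haefliger} (the appeal to Zorn is superfluous, since the ascending chain condition already yields maximal elements). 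For the final step, the paper splits into the cases ${\rm Fix}(\xi(L^\circ))\neq X$ and ${\rm Fix}(\xi(L^\circ))=X$, and in the latter chooses a maximal fixed point set of a compact open subgroup whose stabilizing group has minimal order, then runs a conjugation computation; you instead take the pointwise $G$-stabilizer $F$ of the single invariant proper fixed point set ${\rm Fix}(\Phi(H'))$ supplied by Main Theorem (3) and observe directly that $F$ is finite, non-trivial, and normalized by the setwise-preserving group $\varphi(L)$. This is cleaner and uniform, and it is correct: the setwise stabilizer of a set always normalizes its pointwise stabilizer, ${\rm Fix}(F)={\rm Fix}(\Phi(H'))$ forces $F\supseteq\varphi(H')\neq\{e\}$, and properness bounds $F$ inside a point stabilizer. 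The remaining verifications (reduction to injective $\psi$, conditions (i)--(iv) via the torsion-finiteness observation, and the torsion-freeness of right-angled Artin groups and limit groups for the ``in particular'' clause) match the paper's Lemma \ref{intersection property} and its surrounding discussion.
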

Before giving the proof, we will discuss two lemmas that are helpful for checking the conditions of the Main Theorem.

\begin{lemma}\label{intersection property} Let $\psi\colon G\to{\rm Isom}(X)$ be a faithful proper action on a \CAT\ space $X$. Assume that all torsion subgroups of $G$ are finite. Then every subgroup $H\subseteq G$ acting via elliptic isometries on $X$ is finite and therefore has a global fixed point. 

In particular, conditions (iii) and (iv) of the Main Theorem are satisfied.
\end{lemma}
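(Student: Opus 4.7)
The plan is to first establish the displayed finiteness assertion about elliptically acting subgroups of $G$, and then deduce conditions (iii) and (iv) of the Main Theorem as short corollaries applied to any action $\Phi\colon L\to{\rm Isom}(X)$ that factors through $\psi$ (e.g.\ via an abstract homomorphism $\varphi\colon L\to G$).

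For the main claim, fix $H \subseteq G$ acting by elliptic isometries. Each $h \in H$ fixes some point $x_h$, so $\langle h\rangle \subseteq {\rm Stab}_G(x_h)$. Properness of $\psi$ at $x_h$ furnishes a radius $r > 0$ for which $\{g \in G \mid \psi(g)(B_r(x_h)) \cap B_r(x_h) \neq \emptyset\}$ is finite, and since every element of the stabilizer trivially lies in this set, ${\rm Stab}_G(x_h)$ is finite. Hence $h$ has finite order, $H$ is a torsion subgroup of $G$, and the standing hypothesis forces $|H| < \infty$. A finite group acts with a finite, hence bounded, orbit, so Proposition \ref{boundedOrbit}(i) yields a global fixed point.

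For condition (iii), any finitely generated subgroup $L_0 \subseteq L$ whose image in ${\rm Isom}(X)$ consists of elliptic isometries corresponds under faithful $\psi$ to an elliptically acting subgroup of $G$; the main claim makes it finite and delivers a global fixed point, which is then also fixed by $L_0$. For condition (iv), given a subfamily $\mathcal{F} \subseteq \{{\rm Fix}(\Phi(l)) \mid l \in L\}$ with the finite intersection property, I put $S = \{l \in L \mid {\rm Fix}(\Phi(l)) \in \mathcal{F}\}$. For every finite $F \subseteq S$,
\[
\bigcap_{l \in F}{\rm Fix}(\Phi(l)) = {\rm Fix}\bigl(\Phi(\langle F\rangle)\bigr)
\]
is non-empty by Lemma \ref{Fix}(i) together with the FIP, so $\Phi(\langle F\rangle)$ acts elliptically. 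Every element of $\tilde H := \langle \Phi(l) \mid l \in S\rangle$ lies in such a finitely generated piece, and so is of finite order by the main claim; thus $\tilde H$ is a torsion subgroup (pulled back to $G$ via the isomorphism induced by faithfulness of $\psi$), hence finite by the standing hypothesis, hence fixes a point of $X$, which necessarily lies in $\bigcap \mathcal{F}$.

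The main obstacle I anticipate is the seemingly innocuous passage from ellipticity to finite order, which fails in general (think of an irrational rotation of $\mathbb{R}^2$). What salvages the argument is properness of $\psi$: it promotes each point-stabilizer to a finite group, which in turn is the bridge to the torsion hypothesis on $G$ that supplies the desired finiteness.
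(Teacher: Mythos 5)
Your proof is correct and follows essentially the same route as the paper's: properness forces elliptic elements to have finite order, the torsion hypothesis then makes any elliptically acting subgroup finite (hence with a global fixed point by Proposition \ref{boundedOrbit}), and for condition (iv) one observes that each word in the generators fixes the non-empty (by the finite intersection property) intersection of finitely many fixed point sets and is therefore elliptic, so the generated group is torsion, finite, and has a fixed point lying in the full intersection. The only difference is that you spell out the stabilizer-finiteness argument behind the implication ``elliptic implies finite order,'' which the paper asserts in a single clause.
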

\begin{proof}
Let $H\subseteq G$ denote a subgroup acting via elliptic isometries. Then each $\psi (h)$ is elliptic and thus has finite order, since the action is proper. So the group $\psi(H)$ is a torsion group and is by assumption finite. Due to Proposition \ref{boundedOrbit}, this means that $\psi(H)$ has a global fixed point. This immediately shows condition (iii) of the Main Theorem.

For condition (iv) suppose the family $\left\{ {\rm Fix}(\psi(h_i))\mid i\in I\right\}$ has the finite intersection property for some index set $I$. If we can show $F:=\langle \psi(h_i)\mid i\in I\rangle $ is finite, then we are done. To do so it is enough to show that every element acts elliptically due to the torsion subgroups being finite. So let $w\in F$. The element $w$ is a finite word, which means $w=\psi\left(h_{i_1}\right)\psi\left(h_{i_2}\right)...\psi\left(h_{i_n}\right)$ for some $i_1,i_2,...,i_n\in I$. But then $w$ fixes $\bigcap\limits_{j=1}^{n}{\rm Fix}\left(\psi\left(h_{i_j}\right)\right)$ which is non-empty, because the family $\left\{ {\rm Fix}(\psi(h_i))\mid i\in I\right\}$ has the finite intersection property, so condition (iv) is satisfied.
\end{proof}

\begin{lemma}\label{max el}
Let $\psi\colon G\to{\rm Isom}(X)$ be a faithful proper cocompact action on a \CAT\ space $X$. Assume that all torsion subgroups of $G$ are finite. For any family of subgroups $\mathcal{H}=\left(H_i\right)_{i\in I}$, the family $\left\{{\rm Fix}(\psi(H_i))\mid i\in I\right\}$ has a maximal element.
\end{lemma}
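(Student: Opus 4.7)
The plan is to prove the slightly stronger statement that the full poset of fixed point sets $\{{\rm Fix}(\psi(H))\mid H\subseteq G\}$, ordered by inclusion, satisfies the ascending chain condition; existence of a maximal element in any subfamily $\{{\rm Fix}(\psi(H_i))\}_{i\in I}$ then follows at once by starting at any element and iteratively replacing it with a strictly larger one until the process terminates.

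First I would attach to every non-empty fixed point set $F$ its \emph{pointwise stabilizer} $S(F):=\{g\in G\mid \psi(g)(x)=x\text{ for all }x\in F\}$. Two elementary properties of this assignment are needed: (a) $F={\rm Fix}(\psi(S(F)))$, because any $H$ with $F={\rm Fix}(\psi(H))$ satisfies $H\subseteq S(F)$, sandwiching ${\rm Fix}(\psi(S(F)))$ between $F$ and itself; and (b) if $F_1\subsetneq F_2$ are two non-empty fixed point sets then $S(F_1)\supsetneq S(F_2)$, with strictness forced because equality of the stabilizers would, via (a), force $F_1=F_2$.

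Next I would invoke properness to conclude that $S(F)$ is a finite group whenever $F$ is non-empty: the group $\psi(S(F))$ consists entirely of elliptic isometries (it fixes every point of $F$), so Lemma \ref{intersection property} directly yields finiteness of $S(F)$; more concretely, picking any $p\in F$, the subgroup $S(F)$ sits inside the point stabilizer of $p$, which is finite by properness. Now the argument concludes: an infinite strictly ascending chain ${\rm Fix}(\psi(H_{i_1}))\subsetneq{\rm Fix}(\psi(H_{i_2}))\subsetneq\cdots$ in the family (reduced to the non-empty case by restarting at the first non-empty entry) would translate, via $S$, into an infinite strictly descending chain of subgroups of the finite group $S({\rm Fix}(\psi(H_{i_1})))$, which is impossible.

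The main conceptual hurdle I anticipate is the mismatch between the indexing subgroups $H_i$ and their fixed point sets: the $H_i$ themselves need not be comparable under inclusion even when their fixed point sets form an ascending chain, so no contradiction is visible by looking at the $H_i$ directly. Passing to the canonically associated pointwise stabilizers $S(F_i)$ restores the desired monotonicity, after which properness (via Lemma \ref{intersection property}) finishes the job. Neither cocompactness nor the finiteness-of-torsion hypothesis is strictly required in this lemma, although both live in the standing assumptions and cocompactness is invoked indirectly through the definition of a \CAT\ group that controls the ambient action.
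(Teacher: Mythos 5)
Your argument is correct, and it takes a genuinely different route from the paper. The paper's proof first observes (as you do) that a subgroup with non-empty fixed point set must be finite by properness, but then invokes the theorem that a \CAT\ group has only finitely many conjugacy classes of finite subgroups (\cite[III.$\Gamma$.Theorem 1.1]{Bridson-Haefliger}): an infinite strictly ascending chain ${\rm Fix}(\psi(H_{i_1}))\subsetneq{\rm Fix}(\psi(H_{i_2}))\subsetneq\cdots$ would produce infinitely many finite subgroups, two of which are conjugate, so their fixed point sets differ by an isometry $\psi(g)$, contradicting the strict inclusion. Your proof replaces this global input by the local Galois-type correspondence $F\mapsto S(F)$ between non-empty fixed point sets and their pointwise stabilizers: properties (a) and (b) are exactly right, the finiteness of $S(F)$ follows from properness alone via a point stabilizer, and the order-reversal turns an ascending chain of fixed point sets into a descending chain of subgroups of a single finite group. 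This buys two things: it avoids the nontrivial conjugacy-class finiteness theorem (and the paper's final step, which rests on an isometry not being able to carry a fixed point set onto a proper superset of itself — a point that deserves care for unbounded fixed point sets, whereas your argument sidesteps it entirely), and it establishes the stronger conclusion that the full poset of fixed point sets satisfies the ascending chain condition for \emph{any} proper action, with neither cocompactness nor the finiteness-of-torsion hypothesis playing a role, exactly as you observe. The one small point worth making explicit is the degenerate case in which every ${\rm Fix}(\psi(H_i))$ is empty, where $\emptyset$ is itself the maximal element; you do flag this, so the proof is complete.
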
 
\begin{proof}
Let $H_i\in \mathcal{H}$. Using the same argument as in the previous lemma, we can see that if ${\rm Fix}(\psi(H_i))\neq\emptyset$, then  $\psi(H_i)$ and therefore $H_i$ is finite. If the family $\{{\rm Fix}(H_i)\mid i\in I\}$ did not contain a maximal element, then we could find a proper infinite chain 
$${\rm Fix}(\psi(H_{i_1}))\subsetneq {\rm Fix}(\psi(H_{i_2}))\subsetneq  {\rm Fix}(\psi(H_{i_3})) \subsetneq\ldots$$
In particular, we have infinitely many finite subgroups $H_{i_1},H_{i_2},\ldots$ . Since $G$ is a \CAT\ group, it has only finitely many conjugacy classes of finite subgroups (see \cite[III.$\Gamma$.Theorem 1.1]{Bridson-Haefliger}). Therefore at some point we have indices $j\neq k \in I$ and a $g\in G$ such that $H_{i_j}=gH_{i_k}g^{-1}$. Then Lemma \ref{Fix} implies that 
$${\rm Fix}(\psi(H_{i_j}))={\rm Fix}(\psi(gH_{i_k}g^{-1}))=\psi(g) ({\rm Fix}(\psi(H_{i_k})).$$
Since $j\neq k$, either ${\rm Fix}(\psi(H_{i_j}))\subsetneq {\rm Fix}(\psi(H_{i_k}))$ or ${\rm Fix}(\psi(H_{i_k}))\subsetneq {\rm Fix}(\psi(H_{i_j}))$, which is both impossible since each $\psi(g)$ is an isometry.

\end{proof}
\begin{proof}[Proof of Corollary E] Let $X$ be a proper CAT$(0)$ space and $\psi\colon G\to {\rm Isom}(X)$ a proper cocompact group action. We thus obtain an action $\xi\colon L\to {\rm Isom}(X)$ via composition $\xi:=\psi\circ \varphi$. We need to show that (3) of the Main Theorem is applicable.

Without loss of generality we can assume that $\psi$ is injective, because else $\ker(\psi)$ is a finite normal subgroup of $G$ due to properness of the action. Therefore $G={\rm Nor}_G(\ker(\psi))$ and hence the image of $\varphi $ is contained in the normalizer of a finite non-trivial subgroup, namely $\ker (\psi)$.

So from now on we assume that $\psi$ is injective. We have the following situation:
\begin{center}
 \begin{tikzpicture}
  \coordinate[label=above:$L$] (A) at (0,0);
  \coordinate[label=above:${\rm Isom}(X)$] (B) at (4.5,0);
  \coordinate[label=below:$G$] (C) at (0,-2);
 
  \draw[->] (0.5,0.4) to node[midway, above]{$\xi$} (3.5,0.4) ;
  \draw[->] (A) to (C);
  
\node at (-0.25,-1) {$\varphi$};
  \draw[right hook->](0.5,-2) to node[midway, below]{$\psi$}(3.85,0.1);
  \end{tikzpicture}
  \end{center}
Note that $G$ and ${\rm Isom}(X)$ carry the discrete topology and $\psi$ is injective, so $\psi$ is a homeomorphism onto its image.

Since $X$ is proper it is locally compact, thus Proposition \ref{FinDim} implies that it has finite flat rank.
Additionally, the action $\xi$ is semi-simple and $\inf\{|\xi(l)|>0 \mid l\in L\}>0$ as we noted in the definition of a \CAT\ group. In particular, conditions (i) and (ii) of the Main Theorem are satisfied for the action $\xi\colon L\to {\rm Isom}(X)$.

To see that this action satisfies conditions (iii) and (iv) of the Main Theorem, we apply Lemma \ref{intersection property} to the subgroup $\varphi(L)\subseteq G$. If all torsion subgroups of $G$ are finite, so are the torsion subgroups of $\varphi(L)$. Lemma \ref{intersection property} shows that the properties (iii) and (iv) of the Main Theorem are satisfied.

Due to Lemma \ref{max el}, we obtain the existence of a maximal element in any subfamily of $\{{\rm Fix}(\xi(H))\mid H\subseteq L \text{ closed subgroup}\}$. 

So we can apply the Main Theorem (3) to obtain that either $\xi$ is continuous or that there exists a closed subgroup $H'\subseteq L$ such that the non-empty proper fixed point set ${\rm Fix}(\xi(H'))$ is invariant under $\xi$.

If $\xi$ is continuous, then $\psi^{-1} \circ \xi =\varphi$ is continuous since $\psi $ is a homeomorphism.

Suppose now that ${\rm Fix}(\xi(L^\circ))\neq X$ (it is non-empty by Main Theorem (1)). That means $\varphi(L^\circ)\neq \{e_G\}$. For any $l\in L$ we additionally have the equality $\varphi(l)\varphi(L^\circ)\varphi(l)^{-1}=\varphi(lL^\circ l^{-1})=\varphi(L^\circ)$, so $\varphi(L)\subseteq {\rm Nor}_G(\varphi(L^\circ)) $. Since $\xi(L^\circ)$ has a global fixed point, $\varphi(L^\circ)$ has to be finite since the action is proper. 
	
	So we can assume ${\rm Fix}(\xi(L^\circ))=X$. This means the action factorizes through $\pi\colon L\to L/L^\circ$. 	
By assumption, the family 
$$\left\{ {\rm Fix} (\xi(\pi^{-1}(K)))\mid K\subseteq L/L^\circ \text{ compact open subgroup}\right\}$$ 
has maximal elements. Let ${\rm Fix}(\xi(\pi^{-1}(K_0)))$ be a maximal element such that the order of $\xi(\pi^{-1}(K_0))$ is minimal, note that the order is always finite, since ${\rm Fix}\left(\xi(\pi^{-1}(K_0))\right)$ is non-empty and the action is proper.  If the order of the subgroup $\xi(\pi^{-1}(K_0))$ is $1$, then $\xi$ and thus $\varphi$ is continuous. If the order of the subgroup $\xi(\pi^{-1}(K_0))$ is greater than $1$, then $\varphi(\pi^{-1}(K_0))$ is a non-trivial finite subgroup, so it suffices to show that $\varphi(L)\subseteq {\rm Nor}_G(\varphi(\pi^{-1}(K_0)))$. For $l\in L$ we have:
\begin{align*} \varphi(l)\varphi(\pi^{-1}(K_0)) \varphi(l)^{-1}&=\varphi\left(l\pi^{-1}(K_0)l^{-1}\right)\\
&=\varphi\left(\pi^{-1}\left(\pi(l)K_0\pi(l^{-1})\right)\right)\quad\text{since }\varphi(L^\circ)=\{e_G\}\\
&\overset{\star}{=}\varphi\left(\pi^{-1}\left(\pi(l)K_0\pi(l^{-1})\cap K_0\right)\right)\\
&\overset{\star\star}{=}\varphi(\pi^{-1}(K_0)).
\end{align*}
To see the equalities in $\star$ and $\star\star$ hold, we argue similarly to the Main Theorem: $\pi(l)K_0\pi(l)^{-1}$, $K_0$ and $\pi(l)K_0\pi(l)^{-1}\cap K_0$ are all compact open subgroups of $L/L^\circ$. The maximality of ${\rm Fix}\left(\xi(\pi^{-1}(K_0))\right)$ implies the equalities
$${\rm Fix}\left(\xi(\pi^{-1}(K_0))\right)={\rm Fix}\left(\xi(\pi^{-1}(\pi(l)K_0\pi(l)^{-1}\cap K_0)\right)={\rm Fix}\left(\xi(\pi^{-1}\left(\pi(l)K_0\pi(l)^{-1}\right)\right).$$
The minimality of the order of $\xi(\pi^{-1}(K_0))$ now converts the equalities of the fixed point sets into equalities of the groups.

For the in particular statement simply note that the groups in question are torsion free \CAT\ groups (see \cite[Prop. 1.1, Theorem]{AlibegovicBestvina} for limit groups and \cite[Theorem 3.1.1.]{CharneyDavis}, \cite{Charney} for RAAGs), so no non-trivial finite subgroups exist, hence the homomorphism has to be continuous.
\end{proof}
\begin{remark} Whether the condition on torsion subgroups of \CAT\ groups is an actual restriction is an open question. See for example \cite[Q 2.11]{BestvinaQ}, \cite[Q. 8.2]{BrdisonQ}, \cite[§ IV.5]{CapraceQ}, \cite[Prob 24]{KapovichQ}, \cite{OsajdaQ}, \cite{SwensonQ}.
\end{remark}

\end{document}